\newtheorem{teo}{Theorem}[section]
\newtheorem{lema}{Lemma}[section]
\newtheorem{defn}{Definition}[section]
\newtheorem{no}{Remark}[section]
\tikzstyle directed=[postaction={decorate,decoration={markings,
    mark=at position .65 with {\arrow{stealth}}}}]
\tikzstyle reverse directed=[postaction={decorate,decoration={markings,
    mark=at position .65 with {\arrowreversed{stealth};}}}]
\title[]{On a fractional queueing model with catastrophes}
\author{Matheus de Oliveira Souza and Pablo M. Rodriguez}
\date{}
\address{
\newline
Matheus de Oliveira Souza
\newline
Instituto De Ci\^encias Matem\'aticas e de Computa\c c\~ao
\newline
Av. Trabalhador S\~ao Carlense, 400 - Centro, CEP 13566-590, S\~ao Carlos - SP, Brazil.
\newline
e-mail: matheus.oliveira.souza@usp.br
\newline
\newline
Pablo M. Rodriguez
\newline
Centro de Ci\^encias Exatas e da Natureza, Universidade Federal de Pernambuco
\newline
Av. Prof. Moraes Rego, 1235. Cidade Universit\'aria, CEP 50670-901, Recife - PE, Brazil.
\newline
e-mail:  pablo@de.ufpe.br
}
\subjclass[2020]{primary 60K25, secondary 60G22}
\keywords{M/M/1 Queue with Catastrophes, Fractional Queue, State Probabilities, Estimation} 
\thanks{}
\begin{document}
  
\maketitle

\begin{abstract}
A $M/M/1$ queue with catastrophes is a modified $M/M/1$ queue model for which, according to the times of a Poisson process, catastrophes occur leaving the system empty.  In this work, we study a fractional $M/M/1$ queue with catastrophes, which is formulated by considering fractional derivatives in the Kolmogorov's Forward Equations of the original Markov process. For the resulting fractional process, we obtain the state probabilities, the mean and the variance for the number of customers at any time. In addition, we discuss the estimation of parameters.
%Customers arrive at a service facility according to a Poisson process. Upon arrival, they are made to wait in a unique queue until it is their turn to be served. After being served it is assumed that they leave the system. Services times are assumed to be a sequence of independent and identically distributed random variables following an exponential law. Besides, it is assumed that according to the times of a Poisson process catastrophes occur leaving the system empty. All the random objects mentioned above are independent. The collection of random variables describing the number of customers in the system at each time is what is called of $M/M/1$ queue with catastrophes. In this work, we study the fractional version of this model, which is formulated by considering fractional derivatives in the Kolmogorov's Forward Equations of the original Markov process. For the fractional $M/M/1$ queue with catastrophes, we obtain the state probabilities, the mean and the variance for the number of customers at any time. In addition, we discuss the estimation of parameters.
\end{abstract}

\section{Introduction}
Queueing Theory allows the formulation of mathematical models and methods to deal with stochastic aspects in applied sciences. Roughly speaking, the models are stochastic processes, usually Markovian, to represent phenomena in which customers arrive in a random way at a service facility. Upon arrival, they are made to wait in queue until it is their turn to be served and after that, it is assumed that they leave the system. The main interest of such models is in the behavior of the number of customers in the system at any time. This could be studied by stating the state probabilities, mean and variance, between other quantities. For a friendly introduction to Queueing Theory, we refer the reader, for instance, to \cite[Chapter 8]{ross}.

A class of well-known queueing models is the exponential one, where it is assumed that arrivals occur according to a Poisson process, and it is assumed that each service time follows an exponential law. Such models are usually called $M/M/k$ queues, where $k$ represents the number of servers. Our interest is in the $M/M/1$ queue with catastrophes in which, additionally, it is supposed that, according to the times of a Poisson process, catastrophes occur leaving the system empty. This model has been studied, for instance, in \cite{catastrofef,catastrofe,kumar}. Here we shall consider a non-Markovian version of such a model, which is inspired by a series of modifications in probabilistic models that appeared as a consequence of the development of the fractional calculus. The recent interest in fractional calculus has been increased by its applications, mainly in numerical analysis and different areas of physics, engineering, economy, etc. In Probability Theory, the fractional calculus, combined with stochastic processes, is useful to represent random phenomena with long memory; that is, where the Markov property does not apply. This is the reason why in recent years the efforts of a large number of researchers have been directed towards the formulation of the fractional counterpart of classical Markovian models. For some examples, we refer the reader to \cite{catastrofef,yule,death,fila,cahoy2010, meerschaert-2019,  puro} and the references therein.

%In queue theory, even the probability has the long memory property, the state probability still converge to the limit probability. This is showed in \cite{fila} for the classical \textit{M/M/1}, with the state probability and

Our purpose is to contribute to this effort by studying the fractional version of the $M/M/1$ queue with catastrophes. Fractional queues were studied for the first time by \cite{fila}, where the authors proposed a generalization of the classical $M/M/1$ queue model derived by applying fractional derivative operators to the Kolmogorov's Forward Equations of the original process. The approach proposed by \cite{fila} allows the formulation of closed-expression for some functional of interest, and at the same time, the estimation of parameters. As far as we know,
the fractional version of the $M/M/1$ queue with catastrophes was proposed and studied only by \cite{catastrofef}, where the authors provided expressions for the state probabilities, the distributions of the busy period for fractional queues without and with catastrophes and the distribution of the time of the first occurrence of a catastrophe. In this work we complement the analysis of \cite{catastrofef} by appealing to the approach proposed by \cite{fila}. As a contribution to the field, we provide a closed-expression for the state probabilities, assuming that the process starts from any state, and for the mean and the variance for the number of customers at any time. In addition, we deal with the estimation of parameters for the model, and we illustrate our results with computational simulations.

%Beside the model explains queues,it can also explains other kinds of problems. Imagine a shop site in the Black Friday and we are interest in the number of users in the site, but everyone, after a random time, are forced to log-out because of the site congestion.

%Knowing the importance of this model we are interest in give another point of view for the fractional model. We showed the classical probability from \cite{kumar} in function of the subordination relationship and how it works. We were interest in show a close form for the mean and for variance. And finally estimate the parameter of the waiting time probability.

We organize the paper as follows: in Section 2 we introduce the classical $M/M/1$ queueing model with catastrophes and its fractional generalization using a subordination relationship. Furthermore, we obtain the state probabilities, and we state the mean and the variance for the number of customers at any time by using a probability generating function. In Section 3, we present the estimation of parameters for the model and their confidence intervals. Lastly, we summarize our results in a brief conclusion in Section 4.

\section{The model and results}

Let $\{X_t\}_{t\geq 0}$ be the exponential queue model $M/M/1$ with catastrophes, and let $P_{i,n}(t):=P(X_t=n|X_0=i)$ be its transition probabilities, where $i,n$ are non-negative integers. In other words, assume that customers arrive at a single-server service system according with a Poisson process of parameter $\lambda>0$. Upon arrival, each customer is made to wait in a unique queue until it is his/her turn to be served. If the server is free at the arrival of a customer then he/she goes directly into service. After service is complete, the corresponding customer leaves the system, and the next customer in the queue enters service. It is assumed that the sequence elements of service times are independent random variables with a common exponential law of parameter $\mu>0$. Moreover, it is assumed that according to the times of a Poisson process of parameter $\xi\geq 0$ catastrophes occur leaving the system empty. For any $t\geq 0$ the random variable $X_t$ denotes the number of customers in the system at time $t$, and $\{X_t\}_{t\geq 0}$ is the continuous-time Markov chain with transitions given by:

$$P_{i,n}(h)=\left\{
\begin{array}{cl}
\lambda h + o(h),& \text{ if }i\in\mathbb{N}\cup\{0\}\text{ and }n=i+1,\\[.2cm]
\mu h + o(h),& \text{ if }i\in\mathbb{N}\setminus \{1\}\text{ and }n=i-1,\\[.2cm]
(\mu + \xi) h + o(h),& \text{ if }i=1\text{ and }n=0,\\[.2cm]
\xi h + o(h),& \text{ if }i\in\mathbb{N}\setminus\{1\}\text{ and }n=0,\\[.2cm]
\end{array}\right.
$$
where $o(h)$ represents a function such that $\lim_{h\to 0}o(h)/h =0$ (see Figure \ref{fig:trans}).

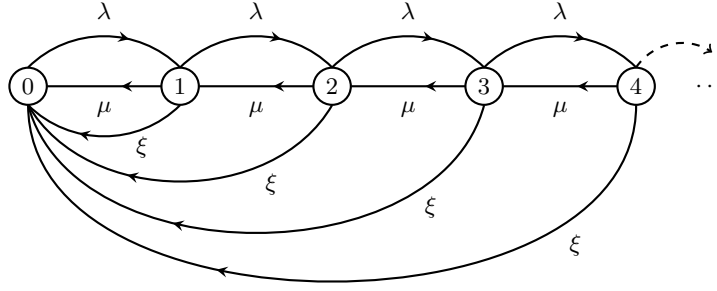
\begin{figure}[h!]
    \centering

\begin{tikzpicture}

\draw [thick] (0,0) circle (7pt);
\draw (0,0) node[font=\footnotesize] {$0$};
\draw [thick] (2,0) circle (7pt);
\draw (2,0) node[font=\footnotesize] {$1$};
\draw [thick] (4,0) circle (7pt);
\draw (4,0) node[font=\footnotesize] {$2$};
\draw [thick] (6,0) circle (7pt);
\draw (6,0) node[font=\footnotesize] {$3$};
\draw [thick] (8,0) circle (7pt);
\draw (8,0) node[font=\footnotesize] {$4$};

%%% flechas

% 0 y 1
\draw [thick, directed] (0,0.25) to [bend left=45] (2,0.25);
%\draw [thick, directed] (0,0.25) to [out=140,in=220,looseness=8] (0,-0.25);
\draw [thick, directed] (2,-0.25) to [bend left=45] (0,-0.25);
\draw [thick, directed] (4,-0.25) to [bend left=60] (0,-0.25);
\draw [thick, directed] (6,-0.25) to [bend left=75] (0,-0.25);
\draw [thick, directed] (8,-0.25) to [bend left=90] (0,-0.25);
% 1 y 2
\draw [thick, directed] (2,0.25) to [bend left=45] (4,0.25);
% 2 y 3
\draw [thick, directed] (4,0.25) to [bend left=45] (6,0.25);
\draw [thick, directed] (6,0.25) to [bend left=45] (8,0.25);
\draw [->, dashed, thick] (8,0.25) to [bend left=45] (9,0.45);
\draw [thick, reverse directed] (4.25,0) to (5.75,0);
\draw [thick, reverse directed] (2.25,0) to (3.75,0);
\draw [thick, reverse directed] (0.25,0) to (1.75,0);
% 3 y 4
\draw [thick, reverse directed] (6.25,0) to (7.75,0);

% probab

\draw (1,1) node[font=\footnotesize] {$\lambda$};
\draw (1.5,-0.8) node[font=\footnotesize] {$\xi$};
\draw (3.2,-1.3) node[font=\footnotesize] {$\xi$};
\draw (5.3,-1.6) node[font=\footnotesize] {$\xi$};
\draw (7.2,-2.1) node[font=\footnotesize] {$\xi$};
\draw (3,1) node[font=\footnotesize] {$\lambda$};
\draw (5,1) node[font=\footnotesize] {$\lambda$};
\draw (7,1) node[font=\footnotesize] {$\lambda$};
\draw (5,-0.3) node[font=\footnotesize] {$\mu$};
\draw (1,-0.3) node[font=\footnotesize] {$\mu$};
\draw (3,-0.3) node[font=\footnotesize] {$\mu$};
\draw (7,-0.3) node[font=\footnotesize] {$\mu$};
\draw (9,0) node[font=\footnotesize] {$\cdots$};

\end{tikzpicture}

    \caption{Transitions and rates for the $M/M/1$ queue with catastrophes.}
    \label{fig:trans}
\end{figure}

The Kolmogorov's Forward Equations for the exponential queue model $M/M/1$ with catastrophes are given by

\begin{equation}
     \begin{cases}
 \displaystyle        \frac{\partial P_{i,0}(t)}{\partial t}=-(\lambda + \xi)P_{i,0}(t)+\mu P_{i,1}(t)+\xi, \mbox{ $t > 0$, $i \geq 0$,}\\[0.4cm]
 \displaystyle        \frac{\partial P_{i,n}(t)}{\partial t}=-(\lambda + \mu + \xi)P_{i,n}(t)+\lambda P_{i,n-1}(t)+\mu P_{i, n+1}(t),\mbox{ $t > 0$, $i \geq 0$, $n \geq 1$,}\\[0.4cm]
          P_{i,n}(0)=\delta_{i, n},
    \end{cases}
    \label{eqdif}
\end{equation}

\noindent
where $\delta_{i, n}$ is the Kronecker delta defined by $\delta_{i, n}=1$ if $i=n$, or $\delta_{i, n}=0$, otherwise. Our purpose is to define a fractional version for the exponential queue model $M/M/1$ with catastrophes. 

\begin{defn}
A fractional $M/M/1$ queue model with catastrophes, with parameter $\alpha\in (0,1)$, is the continuous-time stochastic process $\{X^\alpha_t\}_{t\geq 0}$ such that the transition probabilities $P^\alpha_{i,n}(t):=P\{X^\alpha_t=n|X^\alpha_0=i\}$ satisfy
\begin{equation}\label{eqdiff}
    \begin{cases}
          D^\alpha _t P^\alpha_{i,0}(t)=-(\lambda + \xi)P^\alpha_{i,0}(t)+\mu P^\alpha_{i,1}(t)+\xi, \mbox{ $t > 0$, $i \geq 0$,}\\[0.4cm]
         D^\alpha _t P^\alpha_{i,n}(t)=-(\lambda + \mu + \xi)P^\alpha_{i,n}(t)+\lambda P^\alpha_{i,n-1}(t)+\mu P^\alpha_{i,n+1}(t),\mbox{ $t > 0$, $i \geq 0$, $n \geq 1$,}\\[0.4cm]
          P^\alpha_{i,n}(0)=\delta_{i,n},
    \end{cases}
\end{equation}
where 
$$D^\alpha_t P^\alpha_{i,n}(t):= \frac{1}{\Gamma(1 - \alpha)}\int^t_0\frac{dP^\alpha_{i,n}(s)/ds}{(t - s)^\alpha}ds$$
is the Caputo's fractional derivative of order $\alpha$, and $\Gamma(s)$ is the gamma function defined by 
$$\Gamma(s):=\int_{0}^{\infty}x^{s-1}e^{-x}dx.$$
\end{defn}

%A property is that the differential equation with \textit{Caputo's} fractional derivative need a integer order first condition\cite{kilbas}.

\begin{lema}Let $\{X^\alpha_t\}_{t\geq 0}$ be a fractional $M/M/1$ queue model with catastrophes, with parameter $\alpha$, and consider the probability generating function (p.g.f.) $$G^\alpha(z,t)=G_i^\alpha(z,t):=\sum^\infty_{n=0}z^nP^\alpha_{i, n}(t), \mbox{ $t > 0$, $i \geq 0$.}$$ 
Then,

\begin{equation}
    \begin{cases}
         \displaystyle zD^\alpha_tG^\alpha(z,t)=(1-z)\left[(\mu-z\lambda-\frac{\xi z}{(1-z)})G^\alpha(z,t)-\mu P^\alpha_{i,0}(t)+\frac{\xi z}{(1-z)}\right],  \\
         G^\alpha(z,0)=z^i.
         \label{sistema}
    \end{cases}
\end{equation}
\end{lema}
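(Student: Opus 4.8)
The plan is to derive the stated identity directly from the system \eqref{eqdiff} by the standard generating-function technique, the only genuine subtleties being the bookkeeping of the catastrophe term and the verification that the Caputo operator passes through the series. First I would appeal to the linearity of $D^\alpha_t$ together with the fact that, since $P^\alpha_{i,n}(t)\in[0,1]$, the series $\sum_{n\geq 0} z^n P^\alpha_{i,n}(t)$ is dominated by a convergent geometric series for $|z|<1$ uniformly in $t$ on compacts, so that the Caputo derivative may be applied term by term:
\begin{equation*}
D^\alpha_t G^\alpha(z,t)=\sum_{n=0}^\infty z^n\, D^\alpha_t P^\alpha_{i,n}(t).
\end{equation*}

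Next I would isolate the $n=0$ summand and substitute into it the first line of \eqref{eqdiff}, using the second line for every $n\geq 1$. This splits $D^\alpha_t G^\alpha$ into the boundary contribution $-(\lambda+\xi)P^\alpha_{i,0}(t)+\mu P^\alpha_{i,1}(t)+\xi$ plus three tail sums. The essential step is to re-index these sums so that they collapse into multiples of $G^\alpha$: the diagonal term gives $\sum_{n\geq 1}z^n P^\alpha_{i,n}=G^\alpha-P^\alpha_{i,0}$, the arrival term shifts as $\sum_{n\geq 1}z^n P^\alpha_{i,n-1}=z\,G^\alpha$, and the service term shifts as $\sum_{n\geq 1}z^n P^\alpha_{i,n+1}=\tfrac{1}{z}\bigl(G^\alpha-P^\alpha_{i,0}-z P^\alpha_{i,1}\bigr)$, where I suppress the arguments $(z,t)$ for brevity.

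The point to watch is that the spurious $P^\alpha_{i,1}$ produced by the last shift cancels exactly against the $\mu P^\alpha_{i,1}$ coming from the boundary equation; after this cancellation only $G^\alpha$, $P^\alpha_{i,0}$ and the constant $\xi$ survive, yielding
\begin{equation*}
D^\alpha_t G^\alpha=\Bigl(\tfrac{\mu}{z}-(\lambda+\mu+\xi)+\lambda z\Bigr)G^\alpha+\mu\Bigl(1-\tfrac{1}{z}\Bigr)P^\alpha_{i,0}+\xi.
\end{equation*}
Multiplying through by $z$ clears the $1/z$ terms, and factoring $(1-z)$ out of the resulting polynomial coefficients reproduces exactly the right-hand side of \eqref{sistema}; concretely, one checks the identity $\lambda z^2-(\lambda+\mu+\xi)z+\mu=(1-z)(\mu-\lambda z)-\xi z$. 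The initial condition is immediate, since $G^\alpha(z,0)=\sum_{n\geq 0}z^n\delta_{i,n}=z^i$. I expect the only delicate point to be the justification of the term-by-term fractional differentiation, as the Caputo operator is nonlocal; the algebraic re-indexing, though it must be handled carefully around $n=0$ and $n=1$, is routine.
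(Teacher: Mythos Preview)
Your proposal is correct and follows essentially the same route as the paper: multiply the $n\ge 1$ equations by $z^n$, sum, re-index, combine with the boundary equation at $n=0$ to reach the identity $D^\alpha_t G^\alpha=(\mu/z-(\lambda+\mu+\xi)+\lambda z)G^\alpha+\mu(1-1/z)P^\alpha_{i,0}+\xi$, then multiply by $z$ and factor. The paper organises the same computation by writing the tail sum as $D^\alpha_t[G^\alpha-P^\alpha_{i,0}]$ and then substituting for $D^\alpha_t P^\alpha_{i,0}$, but the algebra is identical; your added remark on justifying term-by-term Caputo differentiation is a point the paper leaves implicit.
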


\begin{proof}

On the one hand, by \eqref{eqdiff}, we obtain
\begin{multline}\label{des_sistema}
D^\alpha _t[G^\alpha(z,t)-P^\alpha_{i,0}(t)]=-(\lambda+\mu+\xi)[G^\alpha(z,t)-P^\alpha_{i,0}(t)]+ \\
+\lambda zG^\alpha(z,t) +\frac{\mu}{z}[G^\alpha(z,t)-P^\alpha_{i,0}(t)-zP^\alpha_{i,1}(t)].
\end{multline}

On the other hand, by replacing $D^\alpha_t P^\alpha_{i,0}$ from \eqref{eqdiff} in \eqref{des_sistema} we get

\begin{equation}
\label{eqlem1}
D^\alpha _t[G^\alpha(z,t)-P^\alpha_{i,0}(t)]=D^\alpha _tG^\alpha(z,t)-[-(\lambda + \xi)P^\alpha_{i,0}(t)+\mu P^\alpha_{i,1}(t)+\xi].
\end{equation}

\smallskip
Putting \eqref{des_sistema} and \eqref{eqlem1} together we have

\begin{equation*}
\begin{array}{rl}
D^\alpha_tG^\alpha(z,t)=&-[\lambda(1-z) +\mu(1-1/z) + \xi]G^\alpha(z,t) +\mu(1 - 1/z)P^\alpha_{i,0}(t) +\xi,
\end{array}
\end{equation*}

\smallskip
which can be written as 

\begin{equation}\label{eq:lem2}
zD^\alpha_tG^\alpha(z,t)=[-z\lambda(1-z)+\mu(1-z)-z\xi]G^\alpha(z,t)-(1-z)\mu P^\alpha_{i,0}(t)+z\xi.
\end{equation}

Therefore, \eqref{sistema} is obtained by \eqref{eq:lem2} and by noting that $G^\alpha(z,0)= \displaystyle \sum^\infty_{n=0}z^n\delta_{i, n} = z^i.$
\end{proof}

\subsection{Representation of the fractional model, and state probabilities}

Our first task is to provide a representation for the fractional model through the exponential one, by mean of a time-change version of the original process. Let $\{B^\alpha_t\}_{t\geq 0}$ be the $\alpha$-stable subordinator and let $\{C^\alpha_t\}_{t\geq 0}$ its inverse process, where $C^\alpha_t$ is the first passage time to the level $t>0$, that is,
\begin{equation}
    C^\alpha_t:=\inf\{s>0,~B^\alpha_s>t\}.
\end{equation}
We refer the reader to \cite[Chapter 3]{bertoin} for more details about the subordinator and its inverse. In addition, we consider the Laplace transform of the inverse process 
\begin{equation}
    \int^\infty_0e^{-st}f_\alpha(y,t)dt=s^{\alpha-1}e^{ys^\alpha }.
\end{equation}

\noindent
Here $f_\alpha(y,t)$ is the density of $C_t^{\alpha}$, and it is given by 
\begin{equation}
    f_\alpha(y,t) = W_{-\alpha,1-\alpha}(-yt^\alpha)t^{-\alpha}=t^{-\alpha}\sum^\infty_{r = 0}\frac{(-yt^{-\alpha})^r}{r!\Gamma(1-\alpha(1+r))},
    \label{wright}
\end{equation}

\noindent
where $W_{-\alpha,1-\alpha}(-x)$, also denoted by $M_{\alpha}(x)$ in \cite{mainardi}, is the Wright distribution of parameter $\alpha$. Moreover, $f_\alpha(y,t)$ can be seen in the solution of the fractional diffusion equation. We refer the reader to \cite{mainardi, difusion} for more details.

\begin{teo}
\label{teosub}
Let $\{X_t\}_{t\geq 0}$ and $\{X^{\alpha}_t\}_{t\geq 0}$ be, respectively, the exponential and fractional with parameter $\alpha\in(0,1]$ queue model $M/M/1$ with catastrophes. If $\{C^\alpha_t\}_{t\geq 0}$, $\alpha \in (0,1]$, is the inverse $\alpha$-stable subordination process, and it is independent of $\{X_t\}_{t\geq 0}$, then
\begin{equation}
    X^\alpha_t=X_{C^\alpha_t}, \text{ for any }t\geq0
    \label{relacaosub}
\end{equation}
where the equality holds for the one-dimensional distribution.
\label{sub}
\end{teo}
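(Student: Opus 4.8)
The plan is to identify the one-dimensional law of the time-changed process $Y_t := X_{C^\alpha_t}$ with the solution of the fractional system \eqref{eqdiff}, and then to invoke uniqueness of that solution. First I would write down the transition probabilities of $Y_t$. Conditioning on the value of the inverse subordinator, using the independence of $\{C^\alpha_t\}_{t\geq0}$ and $\{X_t\}_{t\geq0}$, and inserting the density $f_\alpha$ from \eqref{wright}, I obtain
\begin{equation}
Q_{i,n}(t) := P(Y_t = n \mid Y_0 = i) = \int_0^\infty P_{i,n}(y)\, f_\alpha(y,t)\, dy .
\label{eq:plan-sub}
\end{equation}
Since every $P_{i,n}(y)\in[0,1]$ and $f_\alpha(\cdot,t)$ is a probability density, all interchanges of integration appearing below are justified by Fubini--Tonelli, and it suffices to prove that $Q_{i,n}(t)=P^\alpha_{i,n}(t)$ for all $n$ and $t$, which is exactly the asserted equality \eqref{relacaosub} of one-dimensional distributions.

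The core of the argument is a Laplace-transform bridge. Writing $\tilde g(s)=\int_0^\infty e^{-st}g(t)\,dt$ and letting $\mathcal{L}\{\cdot\}(s)$ denote the Laplace transform in $t$, I apply $\mathcal{L}$ to \eqref{eq:plan-sub} and use the transform of the inverse-subordinator density to get
\begin{equation}
\tilde Q_{i,n}(s) = \int_0^\infty P_{i,n}(y)\, s^{\alpha-1} e^{-y s^\alpha}\, dy = s^{\alpha-1}\,\tilde P_{i,n}(s^\alpha).
\label{eq:plan-LT}
\end{equation}
Next I would Laplace-transform the classical forward equations \eqref{eqdif}, substitute $s\mapsto s^\alpha$, and multiply throughout by $s^{\alpha-1}$. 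By \eqref{eq:plan-LT}, each term $s^{\alpha-1}\tilde P_{i,n}(s^\alpha)$ becomes $\tilde Q_{i,n}(s)$, the constant forcing term is reproduced exactly because $s^{\alpha-1}\cdot\xi s^{-\alpha}=\xi/s$, and the left-hand side takes the form $s^\alpha\tilde Q_{i,n}(s)-s^{\alpha-1}Q_{i,n}(0)$. Using the Caputo rule $\mathcal{L}\{D^\alpha_t g\}(s)=s^\alpha\tilde g(s)-s^{\alpha-1}g(0)$, this left-hand side is precisely $\mathcal{L}\{D^\alpha_t Q_{i,n}\}(s)$, provided the initial values agree. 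They do: $C^\alpha_t\to 0$ as $t\to 0^+$, equivalently $f_\alpha(\cdot,t)\to\delta_0$, so $Q_{i,n}(0)=P_{i,n}(0)=\delta_{i,n}$. Inverting the Laplace transform then shows that $\{Q_{i,n}(t)\}$ solves \eqref{eqdiff} with the correct initial condition.

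Finally, since \eqref{eqdiff} with $P^\alpha_{i,n}(0)=\delta_{i,n}$ admits a unique solution, I conclude $Q_{i,n}(t)=P^\alpha_{i,n}(t)$, which proves the theorem. The step I expect to be most delicate is precisely the passage through the Laplace transform: one must check that the identity $\tilde Q_{i,n}(s)=s^{\alpha-1}\tilde P_{i,n}(s^\alpha)$, combined with the Caputo transform rule, produces exactly the initial-value term $s^{\alpha-1}\delta_{i,n}$, so that the matching $Q_{i,n}(0)=\delta_{i,n}$ is essential rather than incidental. The other point to secure is that uniqueness of the solution of \eqref{eqdiff} is genuinely available, since the argument only exhibits \emph{one} solution and needs uniqueness to identify it with $P^\alpha_{i,n}$.
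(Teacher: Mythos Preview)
Your argument is correct and follows essentially the same Laplace-transform bridge as the paper: both start from the representation \eqref{eq:plan-sub}, pass to Laplace transforms using $\int_0^\infty e^{-st}f_\alpha(y,t)\,dt=s^{\alpha-1}e^{-ys^\alpha}$, and verify that the subordinated quantities satisfy the fractional system. The only packaging differences are that the paper carries out the verification at the level of the generating function (checking \eqref{sistema} rather than \eqref{eqdiff} directly) and uses an integration-by-parts step in place of your clean substitution $s\mapsto s^\alpha$; also, you are explicit about the need for uniqueness of the solution of \eqref{eqdiff}, a point the paper leaves implicit.
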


\begin{proof}

We start by pointing out that \eqref{relacaosub} is equivalent to say that we can write the state probabilities as

\begin{equation}
    P^\alpha_{i,n}(t)=\int^\infty_0P_{i,n}(y)f_\alpha(y,t) dy,
    \label{resub}
\end{equation}

\smallskip
and the p.g.f. as
\begin{equation}
\begin{array}{rl}
    G^\alpha(z,t)&=\displaystyle\sum^\infty_{i = 0}z^i\left\{\int^\infty_0P_{i,n}(y)f_\alpha(y,t)dy\right\}\\[.5cm]
    &=\displaystyle\int^\infty_0\left\{\sum^\infty_{i = 0}z^iP_{i,n}(y)f_\alpha(y,t)\right\}dy\\[.5cm]
    &\displaystyle=\int^\infty_0G(z,y)f_\alpha(y,t)dy.
    \label{resub1}
    \end{array}
\end{equation}

Since we are interested in proving that we can rewrite the fractional process as a transformation of the exponential process, through the inverse $\alpha$-stable subordinator, then is enough if we prove that \eqref{resub} and \eqref{resub1} satisfy \eqref{sistema}. So applying the Laplace transform in \eqref{sistema}; namely, if $\tilde{G}^\alpha(z,s):=\int_{0}^{\infty}e^{-st}G^{\alpha}(z,t)dt,$ and $ \tilde{P}^\alpha_{i,n}(s) := \int^\infty_0e^{-st}P^\alpha_{i,n}(t)dt$, then 
\begin{equation}
    z[s^\alpha \tilde{G}^\alpha(z,s)-s^{\alpha-1}z^i]=(1-z)\left[\left(\mu-\lambda z-\frac{\xi z}{1-z}\right)\tilde{G}^\alpha(z,s)-\mu \tilde{P}^\alpha_{i,0}(s)\right]+\frac{\xi z}{s}.
    \label{trans1eq}
\end{equation}

By using \eqref{resub}, \eqref{resub1} and the Laplace transform of the inverse $\alpha$-stable subordinator process, we get that $z[s^\alpha \tilde{G}^\alpha(z,s)-s^{\alpha-1}z^i]$ is equal to
$$(1-z)\left[\left(\mu -\lambda z -\frac{\xi z}{1-z}\right)\int^\infty_0{G(z,y)s^{\alpha-1}e^{-ys^\alpha}dy} \displaystyle-\mu\int^\infty_0{P_{i,0}(y)}s^{\alpha-1}e^{-ys^\alpha}dy\right]+\frac{\xi z}{s},$$

so
\begin{multline}    \label{trans2eq}
    z[s^\alpha \tilde{G}^\alpha(z,s)-s^{\alpha-1}z^i]=\displaystyle \\
    (1-z)\int^\infty_0s^{\alpha-1}e^{-ys^\alpha}\left[\left(\mu -\lambda z -\frac{\xi z}{1-z}\right){G(z,y)}-\mu{P_{i,0}(y)}\right]dy+\frac{\xi z}{s}.
\end{multline}

Now, since

\begin{equation}
z\left[\frac{\partial G(z,t)}{\partial t}-\xi\right]=(1-z)\left[(\mu-z\lambda-\frac{\xi z}{(1-z)})G(z,t)-\mu P_{i,0}(t)\right],    
\end{equation}
we can use \eqref{trans2eq} to obtain

\begin{equation}\label{trans3eq}
\begin{split}
    z[s^\alpha \tilde{G}^\alpha(z,s)-s^{\alpha-1}z^i]&=\displaystyle s^{\alpha-1}\int^\infty_0{e^{-ys^\alpha}z\frac{\partial G(z,y)}{\partial y}}dy-s^{\alpha-1}\int^\infty_0{e^{-ys^\alpha}z\xi}dy+\frac{\xi z}{s}\\[.6cm]
   & =\displaystyle s^{\alpha-1}z\left[G(z,y)e^{-sy^\alpha} \Bigr|^{y=\infty}_{y=0}+s^\alpha\int^\infty_0{G(z,y)e^{-ys^\alpha}}dy\right]-\frac{\xi zs^{\alpha-1}}{s^\alpha}+\frac{\xi z}{s}\\[.6cm]
    &= s^{\alpha-1}z\left[s^\alpha\displaystyle \int^\infty_0{G(z,y)e^{-ys^\alpha}}dy-z^i\right]\\[.6cm]
    &= z[s^\alpha \tilde{G}^\alpha(z,y)-s^{\alpha-1}z^i].
    \end{split}
\end{equation}

\end{proof}

The previous theorem gains in interest if we realize that we can obtain the state probabilities of the fractional model provided we have the state probabilities of the exponential process. For the exponential $M/M/1$ queue with catastrophes $\{X_t\}_{t\geq 0}$, if 
$$I_n(z)=\sum^\infty_{m = 0}\frac{1}{m!\Gamma(m + n + 1)}\left(\frac{z}{2}\right)^{2m + n}, \text{where } n,z\in\mathbb{C}$$
denotes the modified Bessel function of the first kind, it is well-known (see \cite{kumar}) that   

    \begin{multline}\label{p1}
        P_{i,0}(t)=\\\frac{1}{\mu}\sum ^\infty_{n=i}\frac{(n+1)I_{n+1}((2\sqrt{\lambda\mu})t)e^{-(\lambda+\mu+\xi)t}}{(\sqrt{\lambda/\mu})^{n+1}t}+\frac{\xi}{\mu}\int^t_0\sum^\infty_{n=1}\frac{nI_{n}((2\sqrt{\lambda\mu})u)e^{-(\lambda+\mu+\xi)u}}{(\sqrt{\lambda/\mu})^{n}u}du
    \end{multline}

and, for $n > 0$
    \begin{equation}
    \begin{array}{ll}
        \displaystyle P_{i,n}(t)=&\displaystyle\frac{\xi(\sqrt{\lambda/\mu})^{n+1}}{\sqrt{\lambda\mu}}\int^t_0\sum^\infty_{k=0}\frac{(n+k+1)I_{n+k+1}((2\sqrt{\lambda\mu})u)e^{-(\lambda+\mu+\xi)u}}{(\sqrt{\lambda/\mu})^{k+1}u}du\\\\
        &+\displaystyle\sum^\infty_{m=0}e^{-(\lambda+\mu+\xi)t}\left[\frac{I_{m+n+i+1}((2\sqrt{\lambda\mu})t)}{(\sqrt{\lambda/\mu})^{m-n+i+1}}-\frac{I_{m+n+i+2}((2\sqrt{\lambda\mu})t)}{(\sqrt{\lambda/\mu})^{m-n+i}}\right]\\\\
        &+\displaystyle (\sqrt{\lambda/\mu})^{n-i}I_{n-i}(2(\sqrt{\lambda\mu})t)e^{-(\lambda+\mu+\xi)t}.
        \label{pn}
    \end{array}
    \end{equation}

Using \eqref{wright}, \eqref{p1} and \eqref{pn} in \eqref{resub} we get the state probabilities for the fractional model. In other words, we obtain the following result.

\begin{teo}
Let $\{X^{\alpha}_t\}_{t\geq 0}$ be the fractional queue model $M/M/1$ with catastrophes of parameter $\alpha\in(0,1]$. Let $\beta_{\lambda, \mu, \xi}(x):=x^{-1}e^{-(\lambda+\mu+\xi)x}$,  and let $M_\alpha(x):=W_{-\alpha,1-\alpha}(-x)$, where $W_{-\alpha,1-\alpha}(-x)$ is the Wright distribution with parameter $\alpha$. Then, 

    \begin{equation}
    \begin{array}{ccl}
            P^\alpha_{i,0}(t)&=&\displaystyle\frac{1}{\mu}\int^\infty_0 \left\{ \frac{M_{\alpha}(yt^{-\alpha})}{t^{\alpha}} \beta_{\lambda, \mu, \xi}(y) \sum ^\infty_{n=i}\frac{(n+1)I_{n+1}((2\sqrt{\lambda\mu})y)}{(\sqrt{\lambda/\mu)}^{n+1}} \right\} dy\\[.6cm]
            
      &  &\displaystyle+\frac{\xi}{\mu}\int^\infty_0 \left\{ \frac{M_{\alpha}(yt^{-\alpha})}{t^{\alpha}}  \int^y_0 \left\{ \beta_{\lambda, \mu, \xi}(u)  \sum^\infty_{n=1}\frac{nI_{n+1}((2\sqrt{\lambda\mu})u)}{(\sqrt{\lambda/\mu})^{n}} \right\}du \right\} dy
        \end{array}
    \end{equation}
        
    and, for $n>0$
    
    \begin{equation}
    \begin{array}{ccl}
        \displaystyle P^\alpha_{i,n}(t)&=&\displaystyle\frac{\xi(\sqrt{\lambda/\mu})^{n+1}}{\sqrt{\lambda\mu}} \displaystyle\int^\infty_0  \frac{M_{\alpha}(yt^{-\alpha})}{t^{\alpha}}\left\{ \int^y_0 \beta_{\lambda, \mu, \xi}(u)\sum^\infty_{k=0}\frac{(n+k+1)I_{n+k+1}((2\sqrt{\lambda\mu})u)}{(\sqrt{\lambda/\mu})^{k+1}}du\right\}  dy\\[.7cm]
    &&    \displaystyle+\int^\infty_0 y \beta_{\lambda, \mu, \xi}(y)  \frac{M_{\alpha}(yt^{-\alpha})}{t^{\alpha}} \sum^\infty_{m=0} \left[\frac{I_{m+n+i+1}((2\sqrt{\lambda\mu})y)}{(\sqrt{\lambda/\mu})^{m-n+i+1}}-\frac{I_{m+n+i+2}((2\sqrt{\lambda\mu})y)}{(\sqrt{\lambda/\mu})^{m-n+i}}\right] dy\\[.7cm]
&&        \displaystyle +\int^\infty_0(\sqrt{\lambda/\mu})^{n-i}I_{n-i}(2(\sqrt{\lambda\mu})y)e^{-(\lambda+\mu+\xi)y} \frac{M_{\alpha}(yt^{-\alpha})}{t^{\alpha}}dy.
        
    \end{array}
    \end{equation}

\end{teo}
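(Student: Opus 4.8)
The plan is to read off both formulas as immediate consequences of the subordination identity established in Theorem~\ref{teosub}. The starting point is equation \eqref{resub}, which expresses every fractional state probability as a time average of the corresponding exponential one against the density of the inverse subordinator,
\begin{equation*}
P^\alpha_{i,n}(t)=\int_0^\infty P_{i,n}(y)\,f_\alpha(y,t)\,dy,
\end{equation*}
together with the explicit form of that density from \eqref{wright}, namely $f_\alpha(y,t)=t^{-\alpha}M_\alpha(yt^{-\alpha})$. Substituting the known closed expressions \eqref{p1} and \eqref{pn} for the exponential model into this identity is then essentially the entire content of the statement; no new analytic input about the fractional dynamics is required, only bookkeeping plus one convergence argument.

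First I would treat the case $n=0$. Inserting \eqref{p1} into \eqref{resub} and using the abbreviation $\beta_{\lambda,\mu,\xi}(y)=y^{-1}e^{-(\lambda+\mu+\xi)y}$ to collect the common factor $e^{-(\lambda+\mu+\xi)y}/y$, the two summands of $P_{i,0}(y)$ — the series term and the $\xi$-weighted time integral — are carried through the $y$-integration against $t^{-\alpha}M_\alpha(yt^{-\alpha})$ and reproduce, respectively, the two displayed contributions to $P^\alpha_{i,0}(t)$. The case $n>0$ is handled identically: each of the three lines of \eqref{pn} is multiplied by $t^{-\alpha}M_\alpha(yt^{-\alpha})$ and integrated in $y$, matching the three lines in the formula for $P^\alpha_{i,n}(t)$ term by term, with $\beta_{\lambda,\mu,\xi}$ again serving only to gather the recurring $y^{-1}e^{-(\lambda+\mu+\xi)y}$ wherever it appears.

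The only genuine obstacle is justifying the interchange of the infinite series over the Bessel functions with the integrals in $y$ (and, in the catastrophe terms, with the inner integral in $u$). The plan is to control this by the classical asymptotics of the modified Bessel functions: near the origin $I_m(2\sqrt{\lambda\mu}\,y)=O(y^m)$, so the singular factor $y^{-1}$ carried in $\beta_{\lambda,\mu,\xi}$ remains integrable, while for large argument $I_m(2\sqrt{\lambda\mu}\,y)$ grows like $e^{2\sqrt{\lambda\mu}\,y}/\sqrt{y}$; since $2\sqrt{\lambda\mu}\le\lambda+\mu$, the exponential weight $e^{-(\lambda+\mu+\xi)y}$ dominates and keeps every integrand integrable. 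For the series with nonnegative terms — the two sums defining $P^\alpha_{i,0}(t)$ and the sum on the first line of the expression for $P^\alpha_{i,n}(t)$ — the exchange is immediate by Tonelli's theorem, since $M_\alpha$ is a probability density and all factors are nonnegative. For the difference of Bessel functions on the second line of \eqref{pn}, whose terms are not sign-definite, I would bound the series of absolute values by the same asymptotic estimates and conclude by dominated convergence; the single Bessel term on the third line requires no interchange at all. With these uniform bounds in hand the two formulas follow directly from \eqref{resub}, \eqref{wright}, \eqref{p1} and \eqref{pn}.
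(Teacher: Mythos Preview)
Your proposal is correct and follows the same route as the paper: the theorem is stated there without a formal proof environment, preceded only by the remark that it follows ``by \eqref{resub} and \eqref{wright} in \eqref{p1} and \eqref{pn}'', i.e.\ by direct substitution of the classical formulas into the subordination identity. Your write-up is in fact more careful than the paper's, since you supply the Tonelli/dominated-convergence justification for swapping the Bessel series with the $y$-integral, a point the paper leaves implicit.
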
    

We show in Figure \ref{fig:behaviours} the behavior of the states probabilities for different values of $\alpha$. In such an illustration, we assume that the queue starts with $i=1$, and we consider two cases for $n$ at time $t$; namely, we assume $n = 0$ in Figure \ref{subfig:example_p10} and $n = 1$ in Figure \ref{subfig:example_p11}. We are assuming the arrival rate, the departure rate and the catastrophes rate as $\lambda = 5$, $\mu = 3$ and $\xi = 1$, respectively. We can see that the more we decrease $\alpha$, the slower the convergence of the probability becomes.

\begin{figure}[H]
     \centering
     \begin{subfigure}[a]{0.75\textwidth}
         \centering
         \includegraphics[width=\textwidth]{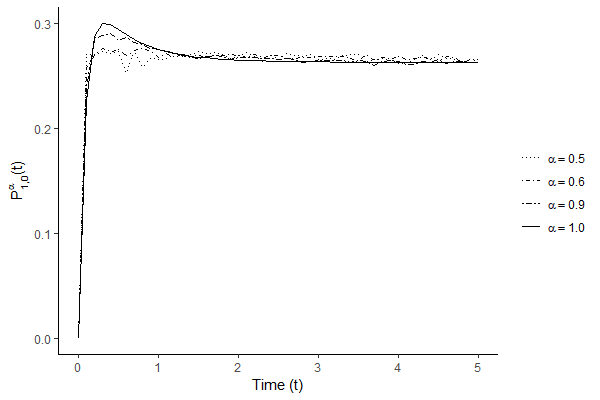}
         \caption{$P^\alpha_{1,0}(t)$ for different values of $\alpha$.}
         \label{subfig:example_p10}
     \end{subfigure}
     
     \begin{subfigure}[a]{0.75\textwidth}
         \centering
         \includegraphics[width=\textwidth]{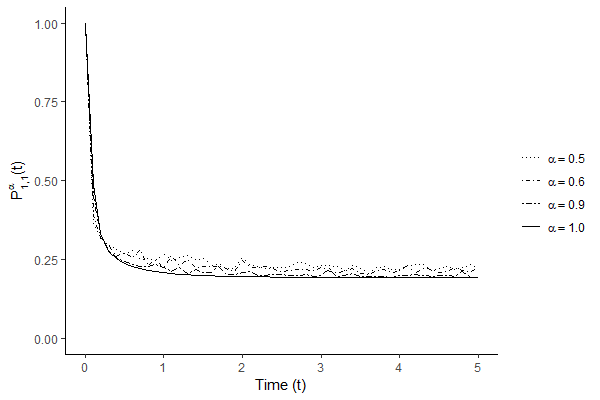}
         \caption{$P^\alpha_{1,1}(t)$ for different values of $\alpha$.}
         \label{subfig:example_p11}
     \end{subfigure}
        \caption{Behavior of the state probability as a function of $t$. Here we consider a queue with catastrophes starting in $i = 1$ with $\lambda = 5$, $\mu = 3$ and $\xi = 1$.}
        \label{fig:behaviours}
\end{figure}

\subsection{Expectation and variance}

In this section we deal with the moments of $X_t^{\alpha}$, and we do it by means of the p.g.f. 
$$G^\alpha(z,t)=\sum^\infty_{n=0}z^iP^\alpha_{i,n}(t),~t>0,~0<\alpha<1.$$ 

%More precisely, our focus is in the first moment and the variance, so we shall use the following properties $$\frac{\partial G^\alpha(1,t)}{\partial z}=\mathbb{E}[X^\alpha_t],$$ 
%$$\frac{\partial^2 G^\alpha(1,t)}{\partial z^2}=\mathbb{E}[(X^\alpha_t)^2]-\mathbb{E}[X^\alpha_t],$$ and $$\frac{\partial G^\alpha(1,t)}{\partial z}-\left(\frac{\partial G^\alpha(1,t)}{\partial z}\right)^2+\frac{\partial^2 G^\alpha(1,t)}{\partial z^2}=Var(X^\alpha_t).$$

We will discuss two ways of using $G^\alpha(z,t)$: on the one hand we deal with the inverse Laplace transform following the ideas of \cite{bayle, fila}; on the other hand, we deal with the manipulation of $G^{\alpha}(z,t)$ through the Mittag-Leffler function like in \cite{kumar}. We point out that both approaches are useful. Let us start with some manipulations of the p.g.f. of $X^{\alpha}_t$.

\begin{teo}
Let $\{X^{\alpha}_t\}_{t\geq 0}$ be a fractional queue model $M/M/1$ with catastrophes of parameter $\alpha\in(0,1]$, and let $G^\alpha(z,t)$ be the p.g.f. Then,
\begin{equation}\label{funcaogeradora}
\begin{split}
G^\alpha(z,t)=z^iE_{\alpha,1}(A(z)t^\alpha)-\mu\left(\frac{1}{z}-1\right)[P^\alpha_{i,0}(t)]\ast & [t^{\alpha-1} E_{\alpha,\alpha}(A(z)t^\alpha)]\\
&+t^\alpha\xi E_{\alpha,\alpha+1}(A(z)t^\alpha),    
\end{split}
\end{equation}
where $A(z)=A_{\lambda,\mu,\xi}(z):=\lambda z-(\lambda+\mu+\xi)+\mu/z$,  
\begin{equation}
    E^\delta_{\beta,\gamma}(w)=\sum^\infty_{r=0}\frac{w^r\Gamma(\delta+r)}{r!\Gamma(\beta r+\gamma)\Gamma(\delta)},~~~~~~~~~~~~~ \gamma, \delta, \beta \in \mathbb{C}, \Re(\beta)>0
\end{equation}
is the 3 parameters Mittag-Leffler function, and $\ast$ is the convolution operator of t.
\label{Gteo}
\end{teo}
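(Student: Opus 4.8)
The plan is to read equation \eqref{sistema} from the Lemma as a \emph{linear inhomogeneous Caputo fractional ODE in $t$}, treating $z$ as a fixed parameter, and then solve it by the fractional analogue of variation of constants. First I would divide \eqref{sistema} by $z$ and collect the coefficient of $G^\alpha$. A short computation shows
\[
\frac{1-z}{z}\left(\mu-\lambda z-\frac{\xi z}{1-z}\right)=\lambda z-(\lambda+\mu+\xi)+\frac{\mu}{z}=A(z),
\]
so that the Lemma's equation becomes
\[
D^\alpha_t G^\alpha(z,t)=A(z)\,G^\alpha(z,t)+f(z,t),\qquad f(z,t):=-\mu\left(\frac1z-1\right)P^\alpha_{i,0}(t)+\xi,
\]
with initial datum $G^\alpha(z,0)=z^i$. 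This is exactly the standard form $D^\alpha_t u=Au+f$ to which the Mittag-Leffler machinery applies.

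Second, I would apply the Laplace transform in $t$, using $\mathcal{L}\{D^\alpha_t u\}(s)=s^\alpha\tilde u(s)-s^{\alpha-1}u(0)$, the same transform already employed in the proof of Theorem \ref{teosub}. Solving the resulting algebraic equation gives
\[
\tilde G^\alpha(z,s)=\frac{s^{\alpha-1}z^i}{s^\alpha-A(z)}+\frac{\tilde f(z,s)}{s^\alpha-A(z)}.
\]
Inverting term by term with the fundamental pairs $\mathcal{L}^{-1}\{s^{\alpha-\gamma}/(s^\alpha-A)\}(t)=t^{\gamma-1}E_{\alpha,\gamma}(A t^\alpha)$ produces the homogeneous contribution $z^i E_{\alpha,1}(A(z)t^\alpha)$ and rewrites the forcing contribution, via the convolution theorem, as $[f(z,\cdot)]\ast[t^{\alpha-1}E_{\alpha,\alpha}(A(z)t^\alpha)]$.

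Third, I would split $f$ into its two pieces. The $P^\alpha_{i,0}$ piece yields directly the convolution term $-\mu(1/z-1)[P^\alpha_{i,0}(t)]\ast[t^{\alpha-1}E_{\alpha,\alpha}(A(z)t^\alpha)]$ of \eqref{funcaogeradora}. The constant piece $\xi$ gives $\xi\int_0^t s^{\alpha-1}E_{\alpha,\alpha}(A(z)s^\alpha)\,ds$, which by the termwise-integration identity $\int_0^t s^{\gamma-1}E_{\alpha,\gamma}(w s^\alpha)\,ds=t^{\gamma}E_{\alpha,\gamma+1}(w t^\alpha)$ (taken with $\gamma=\alpha$, and verified at once on the defining series since $(\alpha r+\alpha)\Gamma(\alpha r+\alpha)=\Gamma(\alpha r+\alpha+1)$) equals $\xi\,t^\alpha E_{\alpha,\alpha+1}(A(z)t^\alpha)$, the last term of \eqref{funcaogeradora}. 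Collecting the three contributions reproduces exactly \eqref{funcaogeradora}.

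The delicate point is the justification of the fractional variation-of-constants formula, equivalently of the Mittag-Leffler Laplace inversions: the expansion of $1/(s^\alpha-A(z))$ as a geometric series in $A(z)/s^\alpha$, and the subsequent term-by-term inversion, are legitimate for $s$ large enough that $|A(z)/s^\alpha|<1$, and the identity for $\tilde G^\alpha(z,s)$ then extends to all relevant $s$ by analyticity together with uniqueness of the Laplace transform. Once this is secured, everything else is a routine manipulation of the Mittag-Leffler series, so I expect this convergence bookkeeping to be the only genuine obstacle.
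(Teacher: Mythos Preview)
Your proposal is correct and follows essentially the same route as the paper: apply the Laplace transform to the fractional ODE from the Lemma, solve the resulting algebraic equation $(s^\alpha-A(z))\tilde G^\alpha=s^{\alpha-1}z^i-\mu(1/z-1)\tilde P^\alpha_{i,0}+\xi s^{-1}$, and invert each term via the Mittag-Leffler Laplace pair \eqref{eq:transformada_mittag}. The only cosmetic difference is that the paper reads off the $\xi$ contribution directly from $s^{-1}/(s^\alpha-A)\leftrightarrow t^\alpha E_{\alpha,\alpha+1}(At^\alpha)$, whereas you first pass through the convolution $\xi\ast t^{\alpha-1}E_{\alpha,\alpha}$ and then integrate termwise; both arrive at the same expression.
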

\begin{proof}
Remember that $\tilde{G}^\alpha(z,s)=\int_{0}^{\infty}e^{-st}G^{\alpha}(z,t)dt,$ so \eqref{trans1eq} reads
\begin{equation}
    s^\alpha \tilde{G}^\alpha(z,s)-s^{\alpha-1}z^i=\left(z\lambda-(\mu+\lambda +\xi)+\mu/z \right)\tilde{G}^\alpha(z,s)-\mu\left(\frac{1}{z}-1\right) \tilde{P}^\alpha_{i,0}(s)+\frac{\xi z}{s}.
\end{equation}

After some algebraic manipulations and setting $A(z):=\lambda z-(\lambda+\mu+\xi)+\mu/z$, we have
\begin{equation}
    (s^\alpha-A(z)) \tilde{G}^\alpha(z,s)=s^{\alpha-1}z^i-\mu\left(\frac{1}{z}-1\right) \tilde{P}^\alpha_{i,0}(s)+\xi z s^{-1}.
\end{equation}
 
% \textcolor[rgb]{1,0,0}{\sout{Leaving $s^\alpha-A(z)$ in the right side and using the inverse Laplace transform of the Mittg-Leffler, which is given by}}
Leaving $s^\alpha-A(z)$ on the right side and using the following inverse Laplace transform, involving the three parameters Mittag-Leffler function,
 
 \begin{equation}
 \label{eq:transformada_mittag}
 \int^\infty_0e^{-st}t^{\gamma-1}E^\delta_{\beta,\gamma}(wt^\beta)=\frac{s^{\beta\delta-\gamma}}{(s^\beta-w)^\delta},
 \end{equation}
 (see \cite[Equation 2.3.24]{mathai} for more details), we conclude that  
 \begin{equation}
\begin{split}
G^\alpha(z,t)=z^iE_{\alpha,1}(A(z)t^\alpha)-\mu\left(\frac{1}{z}-1\right)[P^\alpha_{i,0}(t)]\ast & [t^{\alpha-1} E_{\alpha,\alpha}(A(z)t^\alpha)]\\
&+t^\alpha\xi E_{\alpha,\alpha+1}(A(z)t^\alpha).    
\end{split}
\end{equation}
\end{proof}

\begin{teo}
Let $\{X^{\alpha}_t\}_{t\geq 0}$ be a fractional queue model $M/M/1$ with catastrophes of parameter $\alpha\in(0,1]$, and let $G^\alpha(z,t)$ be the p.g.f. The Laplace transform of $G^\alpha(z,s)$ is given by
    \begin{equation}
        \tilde{G}^\alpha(z,s)=\frac{z^{i+1}s^{\alpha-1}+\xi zs^{-1}-(1-z)\mu \tilde{P}^\alpha_{i,0}}{-\lambda(z-a_1)(z-a_2)}
        \label{glaplace}
    \end{equation}
    where $a_1$ and $a_2$ are zeros of $f(z)=-\lambda z^2+(s^\alpha+\lambda+\mu+\xi)z-\mu$.
\end{teo}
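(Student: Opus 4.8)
The plan is to reduce the claim to a single algebraic rearrangement of the Laplace-transformed generating-function identity already recorded in \eqref{trans1eq}. First I would take \eqref{trans1eq} and distribute everything explicitly, bringing both occurrences of $\tilde{G}^\alpha(z,s)$ to the same side of the equation. The crucial observation is that the $(1-z)$ prefactor on the right-hand side cancels the denominator of the catastrophe term $\frac{\xi z}{1-z}$; hence expanding $(1-z)\left(\mu-\lambda z-\frac{\xi z}{1-z}\right)$ yields the polynomial $\mu-z\mu-\lambda z+\lambda z^2-\xi z$ with no remaining fraction in $z$. This is the one step where one must be slightly careful, and it is precisely what makes the final denominator a genuine quadratic in $z$.

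Next I would multiply out the left-hand side, $z[s^\alpha\tilde{G}^\alpha-s^{\alpha-1}z^i]=zs^\alpha\tilde{G}^\alpha-z^{i+1}s^{\alpha-1}$, and collect the coefficient of $\tilde{G}^\alpha(z,s)$. After transferring the term $zs^\alpha\tilde{G}^\alpha$ across and combining, that coefficient becomes
\[
    zs^\alpha-\bigl(\mu-z\mu-\lambda z+\lambda z^2-\xi z\bigr)=-\lambda z^2+(s^\alpha+\lambda+\mu+\xi)z-\mu,
\]
which is exactly the polynomial $f(z)$ named in the statement. The remaining, non-$\tilde{G}^\alpha$, terms are $z^{i+1}s^{\alpha-1}$ coming from the left and $-(1-z)\mu\tilde{P}^\alpha_{i,0}+\xi z s^{-1}$ coming from the right, so the identity reads $\tilde{G}^\alpha(z,s)\,f(z)=z^{i+1}s^{\alpha-1}+\xi z s^{-1}-(1-z)\mu\tilde{P}^\alpha_{i,0}$.

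Finally, since $f$ is a quadratic in $z$ with leading coefficient $-\lambda$, I would factor it as $f(z)=-\lambda(z-a_1)(z-a_2)$, where $a_1,a_2$ are its two roots (regarded as functions of $s$ through the $s^\alpha$ coefficient), and divide through. This produces exactly \eqref{glaplace}. I do not anticipate any genuine obstacle: the whole argument is a bookkeeping exercise in polynomial algebra, and the only places demanding attention are the cancellation described above and the sign convention, so that the denominator is written as $-\lambda(z-a_1)(z-a_2)$ rather than $\lambda(z-a_1)(z-a_2)$.
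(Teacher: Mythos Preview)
Your proposal is correct and follows essentially the same route as the paper: both start from the Laplace-transformed identity \eqref{trans1eq}, expand $(1-z)\left(\mu-\lambda z-\frac{\xi z}{1-z}\right)$ to obtain the quadratic coefficient $-\lambda z^2+(s^\alpha+\lambda+\mu+\xi)z-\mu$ in front of $\tilde{G}^\alpha$, isolate $\tilde{G}^\alpha$, and then factor the quadratic as $-\lambda(z-a_1)(z-a_2)$. Your write-up is simply more explicit about the intermediate algebra (the cancellation of $(1-z)$ and the sign convention) than the paper's terser version.
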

\begin{proof}
    We start the proof as in Theorem \ref{Gteo}, but $z$ is multiplying both sides of the equation 
    \begin{equation}
        z[s^\alpha\tilde{G}^\alpha(z,s)-z^is^{\alpha-1}]=(1-z)[\tilde{G}^\alpha(z,s)(\mu-\lambda z-\frac{\xi z}{(1-z)})-\mu\tilde{P}^\alpha_{i,0}(s)]+\frac{z\xi}{s}.
    \end{equation}

    Leaving $\tilde{G}^\alpha(z,s)$ in the left side, we have
    \begin{equation}
        \tilde{G}^\alpha(z,s)(-\lambda z^2+(s^\alpha+\lambda+\mu+\xi)z-\mu)=z^{i+1}s^{\alpha-1}-(1-z)\mu\tilde{P}^\alpha_{i,0}(s)+\frac{z\xi}{s},
    \end{equation}
    
    that is,
    \begin{equation}
        \tilde{G}^\alpha(z,s)=\frac{z^{i+1}s^{\alpha-1}-(1-z)\mu\tilde{P}^\alpha_{i,0}(s)+z\xi s^{-1}}{-\lambda z^2+(s^\alpha+\lambda+\mu+\xi)z-\mu}.
    \label{denominador}
    \end{equation}

    Let $a_1$ and $a_2$ be the zeros of $f(z):=-\lambda z^2+(s^\alpha+\lambda+\mu+\xi)z-\mu$. The equations of $a_1$ e $a_2$ are given by:
    \begin{equation}
        \begin{cases}
        \displaystyle a_1+a_2=\frac{(s^\alpha+\lambda+\mu+\xi)}\lambda,\\
        \displaystyle a_1a_2=\frac{\mu}{\lambda},\\
        s^\alpha+\xi=-\lambda(1-a_2)(1-a_1).\\
        \end{cases}
        \label{a1a2}
    \end{equation}

  Therefore, 
    \begin{equation}
        \tilde{G}^\alpha(z,s)=\frac{z^{i+1}s^{\alpha-1}+\xi zs^{-1}-(1-z)\mu P^\alpha_{i,0}(s)}{-\lambda(z-a_1)(z-a_2)}.
    \end{equation}
\end{proof}

\begin{teo}
Let $\{X^{\alpha}_t\}_{t\geq 0}$ be a fractional queue model $M/M/1$ with catastrophes of parameter $\alpha\in(0,1]$. The expectation of $X^\alpha_t$ is given by
\begin{equation}\label{expec}
    \mathbb{E}[X^\alpha_t]= iE^1_{\alpha,1}(-\xi t^\alpha)+\mu [P^\alpha_{i,0}(t)]\ast[ t^{\alpha-1}E^1_{\alpha,\alpha}(-\xi t^\alpha)]+\frac{(\lambda-\mu)}{\xi}(1-E^1_{\alpha,1}(-\xi t^\alpha)).
\end{equation}
 
\label{teomean}
\end{teo}
\begin{proof}
In order to prove \eqref{expec} we apply the derivative of \eqref{glaplace}, so
\begin{equation}
\begin{array}{rl}
    \displaystyle\frac{\partial \tilde{G}^\alpha(z,s)}{\partial z}=&\displaystyle\frac{[(i+1)z^is^{\alpha-1}+\xi s^{-1}+\mu\tilde{P}^\alpha_{i,0}(s)](-\lambda(z-a_1)(z-a_2))}{\lambda^2(z-a_1)^2(z-a_2)^2}\\[.4cm]
    &
    \displaystyle +\frac{(z^{i+1}s^{\alpha-1}+\xi zs^{-1}-(1-z)\mu\tilde{P}^\alpha_{i,0}(s))(\lambda[z-a_1+z-a_2] )}{\lambda^2(z-a_1)^2(z-a_2)^2}.
    \end{array}
    \label{derivada1g}
\end{equation}
Using $z=1$ and \eqref{a1a2}, we have
\begin{equation}
\begin{array}{rl}
\displaystyle\frac{\partial \tilde{G}^\alpha(1,s)}{\partial z}=&\displaystyle\frac{((i+1)s^{\alpha-1}+\xi s^{-1}+\mu\tilde{P}^\alpha_{i,0}(s)}{-\lambda(1-a_1)(1-a_2)}+\frac{(s^{\alpha-1}+\xi s^{-1})\lambda[1-a_1+1-a_2] )}{\lambda^2(1-a_1)^2(1-a_2)^2}\\[.4cm]
    =&\displaystyle\frac{(i+1)s^{\alpha-1}+\xi s^{-1}+\mu \tilde{P}^\alpha_{i,0}(s)}{\xi +s^\alpha}
+\displaystyle\frac{(s^{\alpha-1}+\xi s^{-1})\lambda[2-(a_1+a_2)] )}{(\xi+s^\alpha)^2}\\[.4cm]
   =&\displaystyle\frac{(i+1)s^{\alpha-1}+\xi s^{-1}+\mu \tilde{P}^\alpha_{i,0}(s)}{\xi+s^\alpha}
+\frac{(s^{\alpha-1}+\xi s^{-1})(\lambda-\mu-\xi-s^\alpha )}{(\xi+s^\alpha)^2}\\[.4cm]
   =&\displaystyle\frac{is^{\alpha-1}+\mu \tilde{P}^\alpha_{i,0}(s)}{\xi+s^\alpha}
+\frac{(s^{\alpha-1}+\xi s^{-1})(\lambda-\mu-\xi-s^\alpha +\xi +s^\alpha)}{(\xi+s^\alpha)^2}\\[.4cm]
   =&\displaystyle\frac{is^{\alpha-1}+\mu \tilde{P}^\alpha_{i,0}(s)}{\xi+s^\alpha}
+\frac{s^{-1}(s^{\alpha}+\xi)(\lambda-\mu)}{(\xi+s^\alpha)^2}\\[.4cm]
   =&\displaystyle\frac{is^{\alpha-1}+\mu \tilde{P}^\alpha_{i,0}(s)+s^{-1}(\lambda-\mu)}{\xi+s^\alpha}.
   \end{array}
\end{equation}
Finally, using \eqref{eq:transformada_mittag}, we obtain 
\begin{equation}
    \mathbb{E}[X^\alpha_t]= iE^1_{\alpha,1}(-\xi t^\alpha)+\mu P^\alpha_{i,0}(t)\ast t^{\alpha-1}E^1_{\alpha,\alpha}(-\xi t^\alpha)+(\lambda-\mu)t^\alpha E^1_{\alpha,\alpha+1}(-\xi t^\alpha)
    \label{premedia}
\end{equation}
and we conclude the proof of the theorem by \cite[page 82, Theorem 2.2.1]{mathai}, where
\begin{equation}
    \label{eq:221mathai}
    E^1_{\alpha,\beta}(z) = z E^1_{\alpha,\alpha + \beta}(z) + \frac{1}{\Gamma (\beta)}.
\end{equation}
%\begin{equation}
    %\mathbb{E}[X^\alpha_t]= iE^1_{\alpha,1}(-\xi t^\alpha)+\mu P^\alpha_{i,0}(t)\ast t^{\alpha-1}E^1_{\alpha,\alpha}(-\xi t^\alpha)+\frac{(\lambda-\mu)}{\xi}(1-E^1_{\alpha,1}(-\xi t^\alpha))
%\end{equation}
\end{proof}

\begin{no}
By considering $\xi=0$ we recover the result obtained by \cite[equation 2.27]{fila}. For this, we need to use $E^\delta_{\alpha, \beta}(0)=\Gamma(\beta)^{-1}$ in \eqref{premedia}. More precisely, we recover 
\begin{equation}
    \mathbb{E}[X^\alpha_t]= i+\mu J^\alpha P^\alpha_{i,0}(t)+\frac{(\lambda-\mu)t^\alpha}{\Gamma (\alpha +1)}
\end{equation}

where $J^\alpha f(t)$ is the {Riemann-Liouville} fractional integral \cite{kilbas}
\begin{equation}
    J^\alpha f(t)=\frac{1}{\Gamma(\alpha)}\int^t_0(y-t)^{\alpha-1}f(y)dy.
\end{equation}
\end{no}
\begin{no}
By letting $\alpha = 1$, we get the classical result for the $M/M/1$ queue with catastrophes obtained by \cite[Equation 2.25]{kumar}. Note that $E^1_{1,1}(w) = e^w$.
\end{no}

\begin{no}
We emphasize that an alternative way to prove Theorem \ref{teomean} is a suitable application of Theorem \ref{Gteo}, and the fact that
$$\frac{\partial G^\alpha(1,t)}{\partial z}=\mathbb{E}[X^\alpha_t].$$
For this, we differentiate \eqref{funcaogeradora}, using 
$$\frac{dE^\delta_{\alpha,\beta}(t)}{d t}=\frac{\Gamma(\delta + 1)}{\Gamma(\delta)}E^{\delta+1}_{\alpha,\beta + \alpha}(t),$$ 
see \cite[Equation 2.2.1]{shukla}, so 
\begin{equation}
\begin{array}{rl}
    \displaystyle \frac{\partial G^\alpha(z,t)}{\partial z}=&\displaystyle i z^{i-1}E^1_{\alpha,1}(A(z)t^\alpha)+z^it^\alpha\left(\lambda - \frac{\mu}{z^2}\right)E^2_{\alpha, \alpha+1}(A(z)t^\alpha)\\[0.4cm]
    &\displaystyle +\frac{\mu}{z^2}[P^\alpha_{i,0}(t)]\ast [t^{\alpha-1}E^1_{\alpha, \alpha}(A(z)t^\alpha) ] \\[0.4cm] 
    &+\displaystyle\mu\left(\frac{1}{z}-1\right)\left(\lambda - \frac{\mu}{z^2}\right)[P^\alpha_{i,0}(t)]\ast [t^{2\alpha-1}E^2_{\alpha, 2\alpha}(A(z)t^\alpha) ]\\[0.4cm]
    &\displaystyle + t^{2\alpha}\xi\left(\lambda - \frac{\mu}{z^2}\right)E^2_{\alpha, 2\alpha + 1}(A(z)t^\alpha).
    \end{array}
\end{equation}
Taking $z=1$, we have
\begin{equation}
\begin{array}{rl}
     \displaystyle\frac{\partial G^\alpha(1,t)}{\partial z}=&iE^1_{\alpha,1}(-\xi t^\alpha)+t^\alpha\left(\lambda - \mu\right)E^2_{\alpha, \alpha+1}(-\xi t^\alpha)\\[0.4cm]
     &+\displaystyle\mu[P^\alpha_{i,0}(t)]\ast [t^{\alpha-1}E^1_{\alpha, \alpha}(-\xi t^\alpha) ]+t^{2\alpha}\xi(\lambda- \mu)E^2_{\alpha, 2\alpha + 1}(-\xi t^\alpha),
     \end{array}
     \end{equation}
and, since  
\begin{equation}
\label{eq:238mathai}
 E^\delta_{\alpha,\beta-\alpha}(z)-E^{\delta-1}_{\alpha,\beta-\alpha}(z)=zE^\delta_{\alpha,\beta}(z),
 \end{equation}
see \cite[Equation 2.3.8]{mathai}, we get the desired result.
%\begin{equation}
     %\frac{\partial G^\alpha(1,t)}{\partial z}=iE^1_{\alpha,1}(-\xi t^\alpha)+\left(\lambda - \mu\right)E^1_{\alpha, \alpha+1}(-\xi t^\alpha)+\mu[P^\alpha_{i,0}(t)]\ast [t^{\alpha-1}E^1_{\alpha, \alpha}(-\xi t^\alpha) ]
%\end{equation}
%
\end{no}

%Graphics here!

\begin{teo}
Let $\{X^{\alpha}_t\}_{t\geq 0}$ be the fractional queue model $M/M/1$ with catastrophes of parameter $\alpha\in(0,1]$. The variance of $X^\alpha_t$ is given by
\begin{equation}
     \begin{split}
    Var(X^\alpha_t)=&\;i^2E^1_{\alpha,\alpha+1}(-\xi t^\alpha)+2it^\alpha\left(\lambda-\mu\right)E^2_{\alpha, \alpha+1}(-\xi t^\alpha) 
    -\mu[P^\alpha_{i,0}(t)\ast t^{\alpha-1}E^1_{\alpha,\alpha}(-\xi t^\alpha)]\\[.4cm]
    &    \displaystyle +2\mu(\lambda-\mu)[P^\alpha_{i,0}(t)\ast t^{\alpha-1}E^2_{\alpha,2\alpha}(-\xi t^\alpha)] +(\mu + \lambda)t^\alpha E^1_{\alpha, \alpha+1}(-\xi  t^\alpha)\\[.4cm]
    &\displaystyle+2t^{2\alpha}(\lambda-\mu)^2E^2_{\alpha, 2\alpha+1} -[iE^1_{\alpha,1}(-\xi t^\alpha)+\mu P^\alpha_{i,0}(t)\ast t^{\alpha-1}E^1_{\alpha,\alpha}(-\xi t^\alpha)\\[.4cm]
    &\displaystyle + \frac{(\lambda-\mu)}{\xi}(1-E^1_{\alpha,1}(-\xi t^\alpha))]^2.
    \end{split}
    \end{equation}

\end{teo}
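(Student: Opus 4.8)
The plan is to obtain the variance from the factorial moments of $X^\alpha_t$ encoded in the probability generating function $G^\alpha(z,t)$. Writing $G'$ and $G''$ for $\partial_z G^\alpha$ and $\partial_z^2 G^\alpha$, I would use the elementary identities $G'(1,t)=\mathbb{E}[X^\alpha_t]$ and $G''(1,t)=\mathbb{E}[X^\alpha_t(X^\alpha_t-1)]$, so that
\[
\mathrm{Var}(X^\alpha_t)=G''(1,t)+G'(1,t)-\bigl(G'(1,t)\bigr)^2 .
\]
Two of these pieces are already available: $G'(1,t)=\mathbb{E}[X^\alpha_t]$ is exactly \eqref{expec} from Theorem \ref{teomean}, and its square is precisely the final bracketed line of the claimed formula. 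Hence the whole problem reduces to computing $G''(1,t)$ and verifying that $G''(1,t)+G'(1,t)$ coincides with the sum of the first six terms in the statement.

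For $G''$ I would start from the closed form of $G'(z,t)$ already displayed in the remark following Theorem \ref{teomean} (the differentiation of \eqref{funcaogeradora}) and differentiate it once more with respect to $z$. The two ingredients are the chain rule applied to $A(z)=\lambda z-(\lambda+\mu+\xi)+\mu/z$, for which $A'(z)=\lambda-\mu/z^2$ and $A''(z)=2\mu/z^3$, together with the derivative rule for the three–parameter Mittag–Leffler function $\tfrac{d}{dw}E^\delta_{\alpha,\beta}(w)=\delta\,E^{\delta+1}_{\alpha,\beta+\alpha}(w)$, which under the substitution $w=A(z)t^\alpha$ yields a factor $\delta\,t^\alpha A'(z)$. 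The convolution in $t$ and the polynomial prefactors $z^i$, $z^{-2}$, $(1/z-1)$ are handled by the ordinary product rule, the convolution being inert under $\partial_z$ since it acts on the $t$ variable.

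I would then set $z=1$, where several simplifications occur: $A(1)=-\xi$, so every Mittag–Leffler argument collapses to $-\xi t^\alpha$; $A'(1)=\lambda-\mu$ and $A''(1)=2\mu$; and, crucially, every surviving term still carrying the factor $(1/z-1)$ vanishes, so only the branches in which that factor was already removed by differentiation contribute. Collecting the survivors gives $G''(1,t)$ as a combination of terms $E^1_{\alpha,\cdot}(-\xi t^\alpha)$, $E^2_{\alpha,\cdot}(-\xi t^\alpha)$ and convolutions $[P^\alpha_{i,0}(t)]\ast[t^{\,\cdot}E^{\cdot}_{\alpha,\cdot}(-\xi t^\alpha)]$. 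Adding $\mathbb{E}[X^\alpha_t]$ from \eqref{expec} and subtracting its square then assembles the full variance.

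The main obstacle is purely the bookkeeping of this second differentiation: the product and chain rules generate a large number of Mittag–Leffler terms with shifted parameters $(\delta,\beta)$, and matching them to the compact six–term form requires repeated use of the recurrences \eqref{eq:221mathai} and \eqref{eq:238mathai} to merge, for instance, the $E^1_{\alpha,\alpha+1}$ and $E^2_{\alpha,2\alpha+1}$ contributions. I would organise the computation by grouping terms according to their $\delta$–index and their dependence on $i$, $\mu$ and $\xi$, and verify consistency in the limits $\alpha=1$ (where $E^1_{1,1}(w)=e^{w}$, recovering the classical catastrophe variance) and $\xi=0$ (recovering the catastrophe–free fractional variance) as safeguards against sign and index slips.
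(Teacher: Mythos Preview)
Your proposal is correct and follows essentially the same route as the paper: both compute $\partial_z^2 G^\alpha(z,t)$ by differentiating the closed form \eqref{funcaogeradora} term by term via the Mittag--Leffler derivative rule, set $z=1$ so that $A(1)=-\xi$ and the $(1/z-1)$ factors vanish, simplify with the recurrence \eqref{eq:238mathai}, and then assemble $\mathrm{Var}(X^\alpha_t)=G''(1,t)+G'(1,t)-(G'(1,t))^2$ using \eqref{expec}. Your added consistency checks at $\alpha=1$ and $\xi=0$ are a nice safeguard but the core argument is identical.
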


\begin{proof}
Since $$\frac{\partial^2 G^\alpha(1,t)}{\partial z^2}=\mathbb{E}[(X^\alpha_t)^2]-\mathbb{E}[X^\alpha_t],$$
we can find $E[(X^\alpha_t)^2]$ by noting that
\begin{equation}
    \begin{split}
    \displaystyle\frac{\partial^2 G^\alpha(z,t)}{ \partial z^2} = &\displaystyle
    \;i(i-1)z^{i-2}E^1_{\alpha,\alpha+1}(A(z)t^\alpha)+iz^{i-1}t^\alpha
    \left(\lambda -\frac{\mu}{z^2}\right)E^2_{\alpha, \alpha+1}(A(z)t^\alpha)\\
    &+t^\alpha(i\lambda z^{i-1}-(i-2)\mu  z^{i-3})E^2_{\alpha, \alpha+1}(A(z)t^\alpha)
    \displaystyle+2t^{2\alpha}z^i\left(\lambda-\frac{\mu}{z^2}\right)^2E^3_{\alpha,2\alpha+1}(A(z)t^\alpha)\\
    &-  \displaystyle\frac{2\mu}{z^3} [P^\alpha_{i,0}(t)\ast t^{\alpha-1} E^1_{\alpha,\alpha}(A(z)t^\alpha)]+\frac{\mu}{z^2}\left(\lambda-\frac{\mu}{z^2}\right)[P^\alpha_{i,0}(t)\ast t^{2\alpha-1} E^2_{\alpha,2\alpha}(A(z)t^\alpha)]\\
    &-\displaystyle \mu\left[-\frac{\lambda}{z}+\frac{3\mu}{z^4}-\frac{2\mu}{z^3}\right][P^\alpha_{i,0}(t)\ast t^{2\alpha-1} E^2_{\alpha,2\alpha}(A(z)t^\alpha)]\\
    &\displaystyle-2\mu\left(\frac{1}{z}-1\right)\left(\lambda+\frac{\mu}{z^2}\right)^2[P^\alpha_{i,0}(t)\ast t^{3\alpha-1} E^3_{\alpha,3\alpha}(A(z)t^\alpha)]\\
    &    \displaystyle+2t^{2\alpha}\xi\frac{\mu}{z^3}E^2_{\alpha,2\alpha+1}(A(z)t^\alpha)\displaystyle+2\xi t^{3\alpha}\left(\lambda-\frac{\mu}{z^2}\right)^2E^3_{\alpha,3\alpha+1}(A(z)t^\alpha).
    \end{split}
\end{equation}
Thus, by taking $z=1$, we find
\begin{equation}
\begin{split}
   \displaystyle \frac{\partial^2 G^\alpha(1,t)}{\partial z^2}=&\;i(i-1)E^1_{\alpha,1}(-\xi t^\alpha)+it^\alpha(\lambda-\mu)E^2_{\alpha, \alpha+1}(-\xi t^\alpha)\\
   &\displaystyle+t^\alpha(i\lambda -(i-2)\mu )E^2_{\alpha, \alpha+1}(-\xi t^\alpha) +2t^{2\alpha}\left(\lambda-\mu\right)^2E^3_{\alpha,2\alpha+1}(-\xi t^\alpha)\\
    & -2\mu [P^\alpha_{i,0}(t)\ast t^{\alpha-1} E^1_{\alpha,\alpha}(-\xi t^\alpha)]+\mu\left(\lambda-\mu\right)[P^\alpha_{i,0}(t)\ast t^{2\alpha-1} E^2_{\alpha,2\alpha}(-\xi t^\alpha)]\\
    &\displaystyle+\mu(\lambda-\mu)[P^\alpha_{i,0}(t)\ast t^{2\alpha-1} E^2_{\alpha,2\alpha}(-\xi t^\alpha)]\\
    &+2t^{2\alpha}\xi\mu E^2_{\alpha,2\alpha+1}(-\xi t^\alpha)+2\xi t^{3\alpha}\left(\lambda-\mu\right)^2E^3_{\alpha,3\alpha+1}(-\xi t^\alpha).
    \end{split}
    \end{equation}
By applying Equation \eqref{eq:238mathai} and after some algebraic manipulations we get
\begin{equation}\label{expec2}
\begin{split}
    \displaystyle\mathbb{E}[(X^\alpha_t)^2]-\mathbb{E}[X^\alpha_t]=&\;i(i-1)E^1_{\alpha,1}(-\xi t^\alpha) + 2it^\alpha\left(\lambda-\mu\right)E^2_{\alpha, \alpha+1}(-\xi t^\alpha)\\
    &-2\mu[P^\alpha_{i,0}(t)\ast t^{\alpha-1}E^1_{\alpha,\alpha}(-\xi t^\alpha)]\\
    &+ 2\mu(\lambda-\mu)[P^\alpha_{i,0}(t)\ast t^{2\alpha-1}E^2_{\alpha,2\alpha}(-\xi t^\alpha)]\\
    &+2\mu t^\alpha E^1_{\alpha, \alpha+1}(-\xi  t^\alpha)+2t^{2\alpha}(\lambda-\mu)^2E^2_{\alpha, 2\alpha+1}(-\xi t^\alpha).
    \end{split}
    \end{equation}
    Finally, we obtain the second moment for $X_{t}^{\alpha}$ by \eqref{premedia} and \eqref{expec2}; namely
%    \begin{equation}
    %\begin{array}{rl}
    %\mathbb{E}[(X^\alpha_t)^2]=&i(i-1)E^1_{\alpha,\alpha+1}(-\xi t^\alpha)+2it^\alpha\left(\lambda-\mu\right)E^2_{\alpha, \alpha+1}(-\xi t^\alpha)-2\mu[P^\alpha_{i,0}(t)\ast t^{\alpha-1}E^1_{\alpha,\alpha}(-\xi t^\alpha)]\\&
    %\displaystyle +2\mu(\lambda-\mu)[P^\alpha_{i,0}(t)\ast t^{\alpha-1}E^2_{\alpha,2\alpha}(-\xi t^\alpha)]+2\mu t^\alpha E^1_{\alpha, \alpha+1}(-\xi  t^\alpha)\\&+2t^{2\alpha}(\lambda-\mu)^2E^2_{\alpha, 2\alpha+1}(-\xi t^\alpha)+iE^1_{\alpha,1}(-\xi t^\alpha)
   % \displaystyle+\mu P^\alpha_{i,0}(t)\ast t^{\alpha-1}E^1_{\alpha,\alpha}(-\xi t^\alpha)\\&\displaystyle+\frac{(\lambda-\mu)}{\xi}(1-E^1_{\alpha,1}(-\xi t^\alpha)).
    %\end{array}
    %\end{equation}
   % Therefore, we have the second moment    
    \begin{equation}
    \begin{array}{rcl}
    E[(X^\alpha_t)^2]&=&i^2E^1_{\alpha,\alpha+1}(-\xi t^\alpha)+2it^\alpha\left(\lambda-\mu\right)E^2_{\alpha, \alpha+1}(-\xi t^\alpha)-\mu[P^\alpha_{i,0}(t)\ast t^{\alpha-1}E^1_{\alpha,\alpha}(-\xi t^\alpha)]\\[.4cm]
&&    \displaystyle +2\mu(\lambda-\mu)[P^\alpha_{i,0}(t)\ast t^{2\alpha-1}E^2_{\alpha,2\alpha}(-\xi t^\alpha)]\\[.4cm]
&&+ (\lambda + \mu) t^\alpha E^1_{\alpha, \alpha+1}(-\xi  t^\alpha)+2t^{2\alpha}(\lambda-\mu)^2E^2_{\alpha, 2\alpha+1}(-\xi t^\alpha).
    \end{array}
    \end{equation}
 Since $Var(X^\alpha_t)=E((X^\alpha_t)^2)-E(X^\alpha_t)^2 $ the proof is complete.
\end{proof}

Assuming the parameters of Figure \ref{fig:behaviours}, we show in Figure \ref{fig:esp_var_behaviours} the behavior of the expected value and the variance during time.

\begin{figure}[H]
     \centering
     \begin{subfigure}[a]{0.75\textwidth}
         \centering
         \includegraphics[width=\textwidth]{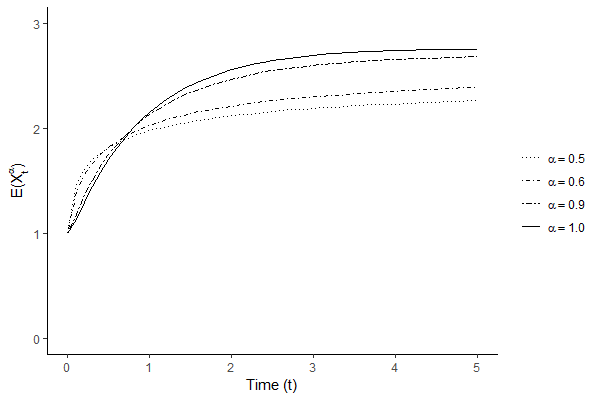}
         \caption{$\mathbb{E}[X^\alpha_t]$ for different values of $\alpha$.}
         \label{subfig:example_esp}
     \end{subfigure}
     \begin{subfigure}[a]{0.75\textwidth}
         \centering
         \includegraphics[width=\textwidth]{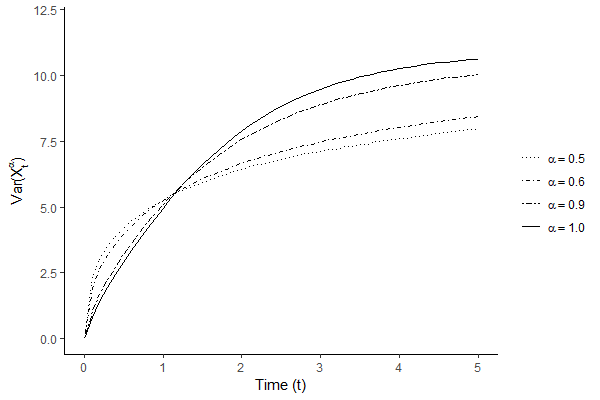}
         \caption{$Var(X^\alpha_t)$ for different values of $\alpha$.}
         \label{subfig:example_var}
     \end{subfigure}
        \caption{Behavior of the mean and the variance along time of a fractional queue with catastrophes starting in $i = 1$ with $\lambda = 5$, $\mu = 3$ and $\xi = 1$.}\label{fig:esp_var_behaviours}
\end{figure}

    \section{Estimation of parameters}

In this section we discuss about the estimation of parameters for the model. In order to do it we start with some remarks about the waiting times for the fractional $M/M/1$ queue with catastrophes. %\textcolor[rgb]{1,0,0}{\sout{Let $S_k$ be the waiting time until something happens in the classical 
%$M/M/1$ 
%queue, then $P(S_k>t) = exp[-(\lambda+\mu+\xi)t].$ On the other hand, if $S^\alpha_k$ denotes the waiting time until something happens in the fractional 
%$M/M/1$ 
%queue we have that, thanks to the subordination relationship \eqref{resub}, it holds
Let $S_k$ be the waiting time until something happens in a classical queue, provided the queue is not empty. Then, $S_k$ follows an exponential distribution with parameter $\lambda + \mu + \xi$. On the other hand, let $S^\alpha_k$ be the waiting time until something happens in a fractional queue when the queue is not empty. Then, thanks to the subordination relationship \eqref{resub}, we have that it holds
\begin{equation}
    P(S^\alpha_k>t)=\int^\infty_0exp[-(\lambda+\mu+\xi)s]f_{\alpha}(s,t)ds.
\end{equation}

By applying the Laplace transform, we get
$$\int^\infty_0e^{-zt}P(S^\alpha_k>t)dt =\int^\infty_0e^{-(\lambda+\mu+\xi)s}z^{\alpha - 1}e^{sz^\alpha}ds = \frac{z^{\alpha - 1}}{\lambda + \mu + \xi + z^\alpha}.$$
Applying Equation \eqref{eq:transformada_mittag} and the property of the Mittag-Leffler functions, described in Equation \eqref{eq:221mathai}, we find
$$P(S^\alpha_k>t) = E^1_{\alpha, 1}(-(\lambda + \mu + \xi)t^\alpha) = 1 - (\lambda + \mu + \xi)t^\alpha E^1_{\alpha,\alpha + 1}(-(\lambda + \mu + \xi)t^\alpha).$$
Finally, since $f_{S^\alpha}(t) = -dP(S^\alpha_k>t)/dt$ and $d[t^{\beta - 1}E^1_{\beta,\gamma}(\omega t^\alpha)]/dt = t^{\beta - 2}E^1_{\beta,\gamma - 1}(\omega t^\alpha)$ (see \cite[Equation 2.2.3]{mathai}), we get
\begin{equation}
   f_{S^\alpha}(t) = (\lambda+\mu+\xi)t^{\alpha-1}E_{\alpha,\alpha}(-(\lambda+\mu+\xi)t^\alpha).
    \label{distmitt}
\end{equation}

From \eqref{distmitt} we obtain that $S^\alpha_k$ follows a Mittag-Leffler distribution, therefore $S^\alpha_k$ can be represented as 
\begin{equation}
    S^\alpha_k=\mathcal{E}^{\frac{1}{\alpha}}T_\alpha,
    \label{set}
\end{equation}
where $\mathcal{E}$ follows an exponential distribution of parameter $\theta=\lambda+\mu+\xi$ and $T_\alpha$ follows a one-sided $\alpha^+$-stable law. If we let $ln(S^\alpha_k)=\frac{1}{\alpha}ln(\mathcal{E})-ln(T_\alpha)$, then we can write from \cite{cahoy2010} 

\begin{equation}
    \mu_{\ln{S^\alpha_k}}= -\frac{ln(\theta)}{\alpha}-\gamma
    \label{esplns}
\end{equation}
and

\begin{equation}
    \sigma^2_{\ln{S^\alpha_k}}=\pi^2\left(\frac{1}{3\alpha^2}-\frac{1}{6} \right),
    \label{varlns}
\end{equation}
where $\gamma$ is the Euler-Mascheroni constant described by $\gamma = 0.57721\dots$ From the previous results, we can deduce the moment estimator of $\alpha$ and $\theta$; namely,
\begin{equation}
    \hat{\alpha}=\frac{\pi}{\sqrt{3\left(\hat{\sigma}^2_{\ln{S^\alpha_k}}+\frac{\pi^2}{6}\right)}},
    \label{mmalpha}
\end{equation}
and

\begin{equation}
    \hat{\theta}=exp(-\hat{\alpha}(\hat{\mu}_{\ln{S^\alpha_k}}+\gamma))
    \label{mmtheta}
\end{equation}
\smallskip
where $\displaystyle{\hat{\mu}_{{\ln{S^{\alpha}_k}}}=\sum^n_{j=1}\frac{\ln(S^{\alpha}_j)}{n}}$\text{ and } $\displaystyle{\hat{\sigma}^{2}}_{\ln{S^\alpha_k}}=\sum^n_{j=1}\frac{(\ln(S^{\alpha}_j)-\hat{\mu}_{\ln(S^{\alpha}_k)})^2}{n}$. The asymptotic normality and confidence interval for $\alpha$ are, respectively, given by
\begin{equation}
    \sqrt{n}(\hat{\alpha}-\alpha)\sim  N\left(~0,~\frac{\alpha^2(32-20\alpha^2-\alpha^4)}{40}\right)
\end{equation}
and
\begin{equation}
    \hat{\alpha}\pm z_{\epsilon/2}\sqrt{\frac{\alpha^2(32-20\alpha^2-\alpha^4)}{40n}},
\end{equation}
as proved in \cite{cahoy2010}. 

We are interested in estimating the parameters $\lambda,\mu,$ and $\xi$, and we shall do it by applying the arguments developed in \cite{fila}. Although the following results are obtained by adapting arguments from \cite{fila}, we include them here for the sake of clarity. By using the proportion of occurrences and the asymptotic properties of each event, like in \cite{fila}, we get a point and interval estimation.
%Then, by using the proportion of occurrences and the asymptotic properties of each event, like in \cite{fila}, we get a point and interval estimation. 
In other words, let $n_a$, $n_s$ and $n_c$ be the number of costumers that arrive to the system, the number of costumers that leave the system and the number of catastrophes, respectively. Let $n=n_a+n_s+n_c$. We can write the arrival proportion and its estimator as $\lambda/\theta=p_1$ and $\hat{p_1}=n_a/n$. We do the same for the proportion of costumers leaving the system: $\mu/\theta=p_2$ and $\hat{p_2}=n_s/n$, and for the catastrophes: $\xi/\theta=p_3$ and  $\hat{p_3}=n_c/n$. Now we shall show the asymptotic normality for the parameters $\lambda,~\mu$, and $\xi$, respectively.

\begin{teo}
For $n\rightarrow\infty$
\begin{align}
    \sqrt{n}(\hat{\lambda}-\lambda)\sim N\left(0,\theta^2p_1(1-p_1)+p^2_1\sigma^2_\theta\right)\\
    \sqrt{n}(\hat{\mu}-\mu)\sim N\left(0,\theta^2p_2(1-p_2)+p^2_2\sigma^2_\theta\right)\\
    \sqrt{n}(\hat{\xi}-\xi)\sim N\left(0,\theta^2p_3(1-p_3)+p^2_3\sigma^2_\theta\right)
\end{align}

where $\hat{p_1}=n_a/n$, $p_1=\lambda/\theta$, $\hat{p_2}=n_s/n$, $p_2=\mu/\theta$, $\hat{p_3}=n_c/n$, $p_3=\xi/\theta$ and
\begin{equation}
    \sigma^2_\theta=\frac{\theta^2[20\pi^4(2-\alpha^2)-3\pi^2(\alpha^4+20\alpha^2-32)(ln(\theta))^2-720\alpha^3(ln(\theta))\zeta(3)]}{120\pi^2},
    \label{vartheta}
\end{equation}
where $\zeta(3)$ is the Riemann-zeta function evaluated in 3.
\end{teo}
\begin{proof}
The proof follows the same steps of the Theorem 3.1 of \cite{fila}, with the difference that here we adapt the number of parameters for the model. In other words, we consider a Multinomial$(1,p_1,p_2,p_3)$ and we use the asymptotic property of the parameters as follows
\begin{equation}
    \sqrt{n}\left(\begin{array}{cc}
         \hat{p_1}-p_1  \\
          \hat{\theta}-\theta
    \end{array}\right)
    \xrightarrow{d} N(0,\Sigma),
\end{equation}
%para $n\xrightarrow{}\infty$, onde a matriz de covariÃÂÃÂ¢ncia $\Sigma$ ÃÂÃÂ© dada por
for $n\xrightarrow{}\infty$, where the covariance matrix $\Sigma$ is given by 
\begin{equation}
    \Sigma=\left(\begin{array}{cc}
        p_1(1-p_1) & 0 \\
        0 & \sigma^2_\theta
    \end{array}\right),
\end{equation}
%onde $\sigma^2_\theta$ ÃÂÃÂ© obtido em Cahoy et al.(2010). 
and $\sigma^2_\theta$ is obtained in \cite{cahoy2010}.
%O teorema central do limite bidimensional implica que
Using the central limit theorem we get
\begin{equation}
    \sqrt{n}(h(\hat{\omega}_n)-h(\omega))\sim N(~0,\Dot{h}(\omega)^T\Sigma\Dot{h}(\omega)),
\end{equation}
%onde $\hat{\omega_n}=(\hat{p_1},\hat{\theta})^T$, h ÃÂÃÂ© uma transformaÃÂÃÂ§ÃÂÃÂ£o linear de $\mathbb{R}^2\xrightarrow{}\mathbb{R}$, $\Dot{h}$ ÃÂÃÂ© contÃÂÃÂ­nuo na vizinhanÃÂÃÂ§a de $\omega\in\mathbb{R}$, $ h(~p_1,~\theta)=p_1\theta$ e $\Dot{h}(~p_1,~\theta)=(~\theta,~p_1)$ que seria a matriz jacobiana de h. Assim concluÃÂÃÂ­mos a prova.
where $\hat{\omega_n}=(\hat{p_1},\hat{\theta})^T$, $h$ is a mapping from $\mathbb{R}^2\xrightarrow{}\mathbb{R}$, $\Dot{h}$ is the gradient of $h$ and it is continuous in a neighborhood of $\omega\in\mathbb{R}^2$. in this case, $ h(p_1,\theta)=p_1\theta$ and $\Dot{h}(p_1,\theta)=(\theta,p_1)$. The proof follows in the exact same way for $p_2$ and $p_3$. Therefore the proof is complete.
\end{proof}

Once we prove the asymptotic properties above, we can write the confidence intervals (1-$\epsilon$)100\% for $\lambda,~\mu~e~\xi$, respectively 
\begin{equation}
   IC[\hat{\lambda}]= \hat{\lambda}\pm z_{\epsilon/2}\hat{\sigma_\lambda},~~IC[\hat{\mu}]= \hat{\mu}\pm z_{\epsilon/2}\hat{\sigma_\mu}~~\text{ and }~~IC[\hat{\xi}]= \hat{\xi}\pm z_{\epsilon/2}\hat{\sigma_\xi},
\end{equation}
 %onde 
where
\begin{equation}
    \hat{\sigma_\lambda}= \sqrt{\frac{\hat{\theta}\hat{p_1}(1-\hat{p_1})+\hat{p_1}^2\hat{\sigma^2_\theta}}{n}},~~ \hat{\sigma_\mu}= \sqrt{\frac{\hat{\theta}\hat{p_2}(1-\hat{p_2})+\hat{p_2}^2\hat{\sigma^2_\theta}}{n}}~~\text{ and }~~ \hat{\sigma_\xi}= \sqrt{\frac{\hat{\theta}\hat{p_3}(1-\hat{p_3})+\hat{p_3}^2\hat{\sigma^2_\theta}}{n}}.
\end{equation}

%\section{Simulation of waiting times}

Since the waiting times are given by a Mittag-Leffler distribution and \eqref{set} holds, we can simulate values for the validation of the proposed estimators and intervals. Taking any $k\geq0$ for starting the process, we generate the values with the following algorithm, which is a modification of the one proposed by \cite{gillespie1976}: \\[.5cm]

\begin{algorithm}[H]
\begin{enumerate}
        \item[1)] Starts with $X^\alpha_0=i$ and $t = 0$.
        \item[2)] If $X^\alpha_t = k \neq0$:
        \begin{enumerate}
            \item[i)] Generate $\mathcal{E}\sim Exp(\lambda + \mu + \xi)$. 
            \item[ii)] Generate $T^\alpha$ from a one-sided $\alpha^+$-stable distribution. 
            \item[iii)]  Calculate $S^\alpha_k = \mathcal{E}^{\frac{1}{ \alpha}}T^\alpha$ and $t = t + S^\alpha_k$.
            \item[iv)]  Generate U uniformly distributed in (0,1).
            \item[v)]  If $0\leq U < \frac{\lambda}{\lambda+\mu+\xi}$, take $X^\alpha_t=k+1$.
            \item[vi)]  If $\frac{\lambda}{\lambda+\mu+\xi}\leq U < \frac{\lambda + \mu}{\lambda+\mu+\xi}$, we take $X^\alpha_t = k - 1$.
            \item[vii)]  Else we take $X^\alpha_t=0$.
        \end{enumerate}
       
       \item[3)] if $X^\alpha_t=0$:
       
        \begin{enumerate}
           \item[i)] Generate $\mathcal{E}_0\sim Exp(\lambda)$. 
            \item[ii)] Generate $T^\alpha$ from a one-sided $\alpha^+$-stable distribution. 
            \item[iii)]  Calculate $S^\alpha_k = \mathcal{E}_0^{\frac{1}{ \alpha}}T^\alpha$ and calculate $t = t + S^\alpha_k$.
            \item[iv)]  Take $X^\alpha_t = 1$;
        \end{enumerate}
        \item[4)]Repeat until the number of desire iteration.
        \end{enumerate}
        \caption{Fractional queue with catastrophe simulation.}
    \end{algorithm}
    
    \begin{no}
We assume $X^\alpha_t = k$ in Algorithm 1 to simplify notation, we point out that  $X^\alpha_t$ changes its value for every iteration.
    \end{no}

Assuming again a fractional queue starting in $i = 1$ and $\lambda = 5$, $\mu = 3$ and $\xi = 1$, we simulated its behavior during time for different values of $\alpha$, see Figure \ref{fig:esp_var_behaviours2} for more details.  
\begin{figure}[H]
     \centering
     \begin{subfigure}[a]{0.49\textwidth}
         \centering
         \includegraphics[width=\textwidth]{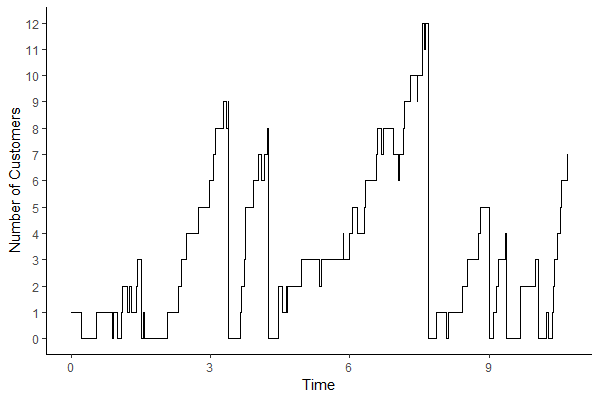}
         \caption{Simulation for $\alpha = 1$.}
         \label{subfig:example_esp}
     \end{subfigure}
     \begin{subfigure}[a]{0.49\textwidth}
         \centering
         \includegraphics[width=\textwidth]{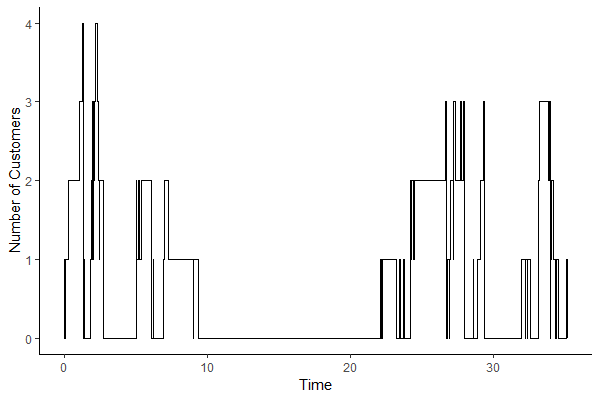}
         \caption{Simulation for $\alpha = 0.9$.}
         \label{subfig:example_var}
     \end{subfigure}
     \begin{subfigure}[a]{0.49\textwidth}
         \centering
         \includegraphics[width=\textwidth]{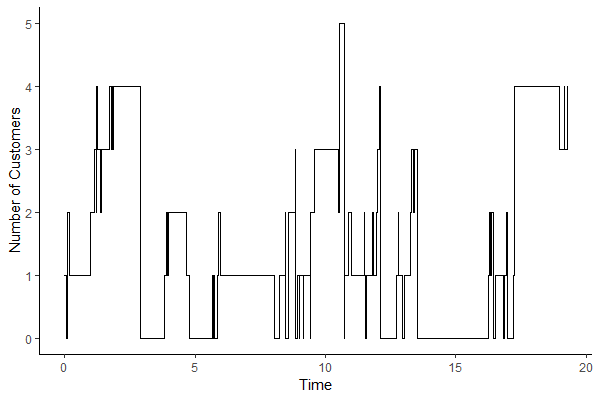}
         \caption{Simulation for $\alpha = 0.8$.}
         \label{subfig:example_var}
     \end{subfigure}
     \begin{subfigure}[a]{0.49\textwidth}
         \centering
         \includegraphics[width=\textwidth]{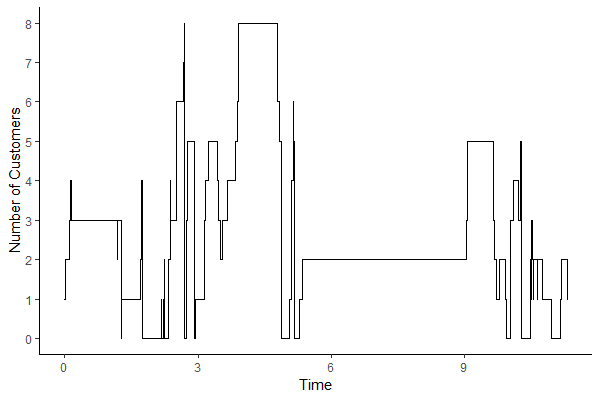}
         \caption{Simulation for $\alpha = 0.7$.}
         \label{subfig:example_var}
     \end{subfigure}
        \caption{Simulation of the fractional queue with catastrophes starting in $i = 1$ with $\lambda = 5$, $\mu = 3$ and $\xi = 1$.}
        \label{fig:esp_var_behaviours2}
\end{figure}

\begin{table}[!htbp] \centering 
  \caption{Estimators and confidential intervals tests} 
  \label{} 
\begin{tabular}{@{\extracolsep{5pt}} lccccccccc} 
\\[-1.8ex]\hline 
\hline \\[-1.8ex]
  & \multicolumn{3}{c}{$n = 10^2$}& \multicolumn{3}{c}{$n = 10^3$} &\multicolumn{3}{c}{$n = 10^4$} \\ 
 & \% Bias & CV & CP & \% Bias & CV & CP & \% Bias & CV & CP \\ 
  \hline 
$\alpha = 0.9$ & $0.656$ & $5.934$ & $0.948$ & $0.022$ & $1.945$ & $0.949$ & $0.021$ & $0.615$ & $0.946$ \\ 
$\lambda = 4$ & $2.877$ & $17.499$ & $0.943$ & $0.374$ & $5.718$ & $0.944$ & $0.077$ & $1.722$ & $0.957$ \\ 
$\mu = 2$ & $3.667$ & $22.269$ & $0.941$ & $0.166$ & $7.031$ & $0.945$ & $0.130$ & $2.088$ & $0.958$ \\ 
$\xi = 1$ & $2.991$ & $31.028$ & $0.919$ & $0.155$ & $9.021$ & $0.954$ & $0.181$ & $2.873$ & $0.951$ \\ 
\hline
$\alpha = 0.5$ & $2.026$ & $8.043$ & $0.955$ & $0.255$ & $2.508$ & $0.960$ & $0.030$ & $0.808$ & $0.953$ \\ 
$\lambda = 1$ & $6.940$ & $41.243$ & $0.932$ & $0.841$ & $12.182$ & $0.952$ & $0.030$ & $3.792$ & $0.963$ \\ 
$\mu = 3$ & $7.540$ & $30.431$ & $0.948$ & $1.025$ & $9.227$ & $0.962$ & $0.141$ & $2.876$ & $0.959$ \\ 
$\xi = 6$ & $8.043$ & $28.494$ & $0.952$ & $1.384$ & $7.903$ & $0.961$ & $0.036$ & $2.572$ & $0.955$ \\ 
\hline
$\alpha = 0.1$ & $1.315$ & $9.007$ & $0.943$ & $0.025$ & $2.817$ & $0.945$ & $0.018$ & $0.943$ & $0.934$ \\ 
$\lambda = 7$ & $8.273$ & $31.447$ & $0.937$ & $0.502$ & $9.004$ & $0.941$ & $0.058$ & $3.034$ & $0.942$ \\ 
$\mu = 0.9$ & $8.990$ & $46.862$ & $0.927$ & $0.604$ & $13.922$ & $0.946$ & $0.219$ & $4.479$ & $0.939$ \\ 
$\xi = 3$& $8.149$ & $34.846$ & $0.941$ & $0.231$ & $10.125$ & $0.946$ & $0.069$ & $3.401$ & $0.936$ \\ 
\hline
$\alpha = 1$& $0.733$ & $5.155$ & $0.944$ & $0.030$ & $1.641$ & $0.953$ & $0.015$ & $0.535$ & $0.951$ \\ 
$\lambda = 2$ & $1.723$ & $18.766$ & $0.918$ & $0.307$ & $5.760$ & $0.950$ & $0.065$ & $1.775$ & $0.952$ \\ 
$\mu = 2$& $1.074$ & $18.041$ & $0.933$ & $0.237$ & $5.770$ & $0.946$ & $0.041$ & $1.859$ & $0.934$ \\ 
$\xi = 2$& $1.258$ & $17.543$ & $0.937$ & $0.116$ & $5.600$ & $0.958$ & $0.009$ & $1.752$ & $0.959$ \\ 
\hline \\[-1.8ex] 
\end{tabular}
\label{bias}
\end{table}

\newpage

%\section{Estimator test}

With the simulated times, we can test the estimated value of the parameters. To test the estimators we use the percent bias (\% Bias) and coefficient of variation (CV), for which values close to zero indicate that our estimated parameters are close to their real value. We use the coverage probabilities (CP) to check the confidence interval of 95\%, for which values close to 0.95 means that our proposal interval is a good confidence interval.
%Geramos 1000 amostras com 100, 1000 e 10000 amostras, calculamos a mÃÂÃÂ©dia dos estimadores, na Tabela \ref{est}, e sua estimativa intervalar mÃÂÃÂ©dia descrita na Tabela \ref{int}. E na Tabela \ref{bias} podemos ver os resultados dos testes do estimador.
We test four cases, and for each case we generated $1000$ samples with a sample size equals $10^2$, $10^3$ and $10^4$. We can see the test results in Table \ref{bias} and we can see also that the more we increase the sample size, the better the estimations are. In fact, the sample size of $10^3$ brings a good approach for the examples, since the \% Bias and CV are relatively short and CP is close to $0.95$.

\section{Conclusion}
In this work we complement the analysis of \cite{catastrofef} by appealing to the approach proposed by \cite{fila}. In \cite{catastrofef} the authors focus their attention on the transient behavior of the model determining the transient distribution, the distribution of the busy period and the probability distribution of the time of the first occurrence of the catastrophe. In our work, we apply, to a fractional queueing model with catastrophes, the approach proposed by \cite{fila}. This allows us to obtain the state probabilities of the fractional queue with catastrophes starting from any number of customers; extending some of the results from \cite{catastrofef} where it is assumed that the process starts from the empty system. Besides, we obtained the mean and the variance from the respective probability generating function. Using the multinomial distribution, we proposed a moment estimator and a confidence interval for the parameters of the model. Finally, we performed computational simulations to validate the estimation of parameters, by showing that the estimators and the confidence intervals worked well for the chosen tests when we increase the sample size.  

\section*{Acknowledgements}
This study was financed in part by the Coordena\c{c}\~ao de Aperfei\c{c}oamento de Pessoal de N\'ivel Superior - Brasil (CAPES) - Finance Code 001. The authors thanks Katiane Silva Concei\c{c}\~ao and Juliana Cobre for fruitful discussions at the early stages of this work. Special thanks are also due to the two anonymous reviewers for their helpful comments and suggestions.

%--------------------------------------------------------------------------------------------------------------------------------------------------------------
%\clearpage

\end{document}